\theoremstyle{plain}
\newtheorem{thm}{Theorem}
\newtheorem*{thm*}{Theorem}
\newtheorem{prop}{Proposition}
\theoremstyle{definition}
\newtheorem{dfn}{Definition}
\newtheorem{que}{Question}
\newtheorem{ex}{Example}
\newcommand{\Aff}{\mathbb A} 
\newcommand{\Prj}{\mathbb P} 
\newcommand{\transpose}{^{\rm T}} 
\newcommand{\vect}[1]{\mathbf{#1}}
\newcommand{\Z}{\mathbb Z}
\DeclareMathOperator{\pr}{\frak B} 
\DeclareMathOperator{\codim}{codim}
\DeclareMathOperator{\col}{col}
\DeclareMathOperator{\GL}{GL}
\DeclareMathOperator{\Graph}{Graph}
\DeclareMathOperator{\Grass}{Grass}
\DeclareMathOperator{\I}{I} 
\DeclareMathOperator{\row}{row}
\DeclareMathOperator{\Spec}{Spec}
\DeclareMathOperator{\Var}{\mathscr V} 
\begin{document}

\title{Principal Minor Ideals and Rank Restrictions on their Vanishing Sets}
\author{Ashley K. Wheeler}
\address{Department of Mathematical Sciences \\
	University of Arkansas \\
	Fayetteville, AR 72701
	}
\email{ashleykw@uark.edu}
\thanks{This work partially partially supported by NSF grant 0943832.}

\begin{abstract} All matrices we consider have entries in a fixed algebraically closed field $K$.  A minor of a square matrix is \emph{principal} means it is defined by the same row and column indices.  We study the ideal generated by size $t$ principal minors of a generic matrix, and restrict our attention to locally closed subsets of its vanishing set, given by matrices of a fixed rank.  The main result is a computation of the dimension of the locally closed set of $n\times n$ rank $n-2$ matrices whose size $n-2$ principal minors vanish; this set has dimension $n^2-n-4$. \end{abstract}

\maketitle


\section{Introduction}\label{sec:intro}

Given a generic $n\times n$ matrix $X$, and $K[X]$ the polynomial ring in entries $x_{ij}$ of $X$ over some algebraically closed field $K$, we study the ideals $\pr_t=\pr_t(X)$, generated by the size $t$ principal minors of $X$.  Historically, various ideals defined using generic matrices have been of great interest to algebraistis -- such examples include the determinantal ideals (see ~\cites{room, eagon_thesis, eagon+northcott, hochster+eagon, svanes, hochster+huneke/94_2}), due to their connection to invariant theory (as in \cite{deconcini+procesi76}) and the Pfaffian ideals (see ~\cites{kleppe, jozefiak+pragacz79, kleppe+laksov80, pragacz81, denegri+gorla11}), whose study is often inspired by the result from \cite{buchsbaum+eisenbud}, as well as their connection to invariant theory.  In developing their generalized version of the Principal Minor Theorem, Kodiyalam, Lam, and Swan (\cite{kodiyalam+lam+swan}) reveal a contrast between the principal minor ideals and the Pfaffian ideals: while the Pfaffian ideals, like the determinantal ideals, satisfy a chain condition according to rank, the principal minor ideals do not.  Furthermore, in \cite{wheeler/14} it is shown that, unlike determinantal ideals and Pfaffian ideals, principal minor ideals are not, in general, Cohen-Macaulay.

Principal minors arise in many other contexts -- see, for example, ~\cite{stouffer24, stouffer28, griffin+tsatsomeros05, oeding11_1, oeding11_2}.  The most direct study of principal minors is in \cite{wheeler/14}.  There, it is shown the algebraic set $\Var(\pr_{n-1})$ has two components: one given by the determinantal ideal $\I_{n-1}$ and the other given by a height $n$ ideal, $\frak Q_{n-1}$, that is the contraction to $K[X]$ of the kernel of the map 
\begin{align*}
K[X]\left[\frac{1}{\det\,X}\right] &\to K[X]\left[\frac{1}{\det\,X}\right]/\pr_1 \\
X&\to X^{-1}.
\end{align*}
When $n=4$, $\pr_{n-1}$ is reduced and as a consequence, $\I_{n-1}$ and $\frak Q_{n-1}$ are linked in that case.  Identifying the components for $\Var(\pr_{n-1})$ relies on another result within that paper, that if $\mathscr Y_{n,r,t}$ denotes the locally closed set of $\Var(\pr_t)$ consisting of rank $r$ matrices, then for all $n,t$, 
\[\mathscr Y_{n,n,t}\cong\mathscr Y_{n,n,n-t}\]
as schemes.

This paper is organized as follows: Section \ref{sec:matrices} gives the necessary preliminaries for the remainder of the paper.  We focus on the components of $\Var(\pr_t)$ by restricting to the locally closed subsets $\mathscr Y_{n,r,t}$, consisting of matrices of rank exactly $r$.  Our main result, given in Section \ref{sec:n-2}, is a computation of the dimension of $\mathscr Y_{n,n-2,n-2}$.

\begin{thm*}[\ref{thm:n-2,n-2}, Section \ref{sec:conseq}] The locally closed set of $n\times n$ rank $n-2$ matrices in $\Spec K[X]$, whose size $n-2$ principal minors vanish, has dimension $n^2-4-n$.  \end{thm*}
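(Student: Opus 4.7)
My plan is to reduce the problem to combinatorics on the Grassmannian via the classical factorization of low-rank matrices. Any rank-$(n-2)$ matrix can be written $X = AB$ with $A \in M_{n\times(n-2)}(K)$ and $B \in M_{(n-2)\times n}(K)$ of rank $n-2$, so Cauchy--Binet gives
\[\det(X_{R,R}) \;=\; \det(A_R)\,\det(B^R) \;=:\; \alpha_{ij}\,\beta_{ij}\]
for each $(n-2)$-subset $R = [n]\setminus\{i,j\}$. Up to scalar, $\alpha_{ij}$ and $\beta_{ij}$ are the Pl\"ucker coordinates of the column span $V := \col(X)$ and the row span $U := \col(X\transpose)$, both in $\Grass(n-2, n)$. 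The map $X \mapsto (V, U)$ is surjective onto $\Grass(n-2, n)^2$ with every fiber of dimension $(n-2)^2$, and the principal minor condition becomes $\alpha_{ij}\beta_{ij} = 0$ for every pair $\{i, j\} \subseteq [n]$; this depends on $(V, U)$ alone.

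Next I would translate the vanishing geometrically. Let $W = K^n/V$ (two-dimensional) and $\bar e_i = e_i + V$; then $\alpha_{ij} = 0$ iff $\bar e_i, \bar e_j$ are linearly dependent in $W$. Set $D := \{i : e_i \in V\}$; on $[n]\setminus D$ the relation $\bar e_i \propto \bar e_j$ is an equivalence, producing a partition $P$ with $k := |P| \geq 2$ (the lower bound is forced by $W$ being two-dimensional). Analogous data $(D', P')$, $k' := |P'| \geq 2$, arise from $U$. With $S := [n] \setminus (D \cup D')$, the vanishing condition becomes: every pair in $\binom{S}{2}$ is a same-class pair of $P|_S$ or of $P'|_S$. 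A short transitivity argument shows that if $|S| \geq 3$, then one of $P|_S, P'|_S$ has $S$ as a single class.

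For fixed $(D, P)$, parameterizing $V$ via $n - |D|$ nonzero points in $K^2$ of proportionality pattern $P$, modulo $\GL_2$, gives stratum dimension $k + n - |D| - 4$ in $\Grass(n-2, n)$. Thus the stratum of $\mathscr Y_{n, n-2, n-2}$ with data $(D, P, D', P')$ has dimension
\[k + k' + n^2 - 2n - |D| - |D'| - 4.\]
Maximizing over admissible data, a case split on $|S|$ shows the maximum is $n^2 - n - 4$, attained for instance at $D = \emptyset$, $D' = \{d\}$, $P = \{\{d\},\, [n]\setminus\{d\}\}$, and $P'$ the discrete partition on $[n]\setminus\{d\}$; here $k = 2$, $k' = n - 1$, and the combinatorial condition holds because $P|_S$ has $S = [n]\setminus\{d\}$ as its only class. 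Writing down an explicit matrix in this stratum confirms non-emptiness.

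The main obstacle is the maximization itself: one must check the strata with $|S| \leq 2$ (where the combinatorial constraint is vacuous but the bounds $k \leq n - |D|$, $k' \leq n - |D'|$ together with $|D| + |D'| \geq n - |S|$ force a smaller dimension), the symmetric sub-case where $P'|_S$ rather than $P|_S$ is trivial on $S$, and variants with larger $|D|, |D'|$. The bookkeeping around degenerate configurations with some $\bar e_i$ vanishing (absorbed into $D$) must be handled carefully enough that no stratum is overlooked.
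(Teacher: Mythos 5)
Your proposal is essentially the paper's argument recast in slightly different combinatorial language. The paper uses the same bundle map $\Theta\colon A\mapsto(\col A,\row A)$ with $\GL(n-2)$ fibers, then encodes the vanishing pattern of Pl\"ucker coordinates of a point of $\Grass(n-2,n)$ as a ``permissible graph'': $v_i,v_j$ adjacent iff $g_{[n]\setminus\{i,j\}}=0$. Your pair $(D,P)$ is exactly this graph in disguise: $D$ is the set of dominating vertices, and the classes of $P$ are the cliques of $G\setminus G_{triv}$, since proportionality of $\bar e_i,\bar e_j$ in $K^n/V$ is the adjacency relation. Your stratum-dimension formula $k+n-|D|-4$ is equivalent to the paper's codimension computation in Theorem~\ref{thm:codimCalc}, and your ``transitivity argument'' is the structural content underlying Theorem~\ref{thm:permPairs}. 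The payoff of your phrasing is real, though: the quotient-space picture makes the equivalence-relation (hence clique) structure automatic, whereas the paper must prove separately that $\Graph(\vect g)$ is permissible; and since you only want a dimension, you can skip the irreducibility analysis of Proposition~\ref{prop:pluckgens}.

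There is, however, a gap you yourself flag: the maximization is the crux and is only sketched. You identify the optimizer ($D=\emptyset$, $D'=\{d\}$, $k=2$, $k'=n-1$) but do not verify that no stratum beats it. Two specific points need care. First, the combinatorial constraint is not vacuous for $|S|=2$: the one pair in $\binom{S}{2}$ must still be covered, which forces one of $k,k'$ down by one; without this the crude bound $2n-2(|D|+|D'|)\le 2|S|$ gives $\dim\le n^2-2n$, which for $n=3$ exceeds $n^2-n-4$. Second, your transitivity argument in fact works for $|S|\ge 2$ (take a proper nonempty class $C$ of $P|_S$; every $x\in S\setminus C$ is then $P'$-equivalent to every $y\in C$, so transitivity collapses $P'|_S$ to a single class), which cleans up the case analysis: you need the ``one of $P|_S,P'|_S$ is trivial'' structure whenever $|S|\ge 2$, and only $|S|\le 1$ is genuinely unconstrained. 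With that correction the bookkeeping closes: for $|S|\ge 2$ (WLOG $P|_S=\{S\}$) you get $k\le 1+|D'\setminus D|$, whence $k+k'-|D|-|D'|\le n+1-|D|-|D|\le n$ with equality forcing $|D|=0$, $|D'|=1$; for $|S|\le 1$ the estimate $n^2-4-2(|D|+|D'|)\le n^2-4-2(n-1)<n^2-n-4$ for $n\ge 3$ suffices. The paper's Theorem~\ref{thm:permPairs} carries out the same optimization by characterizing the minimal covering pairs of permissible graphs as (clique of order $a$ with isolated vertices, its complement), then minimizing codimension over $2\le a\le n-1$.
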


In studying the components of $\mathscr Y_{n,n-2,n-2}$ we define a bundle map $\Theta$ (see Equation (\ref{eq:Theta}), Section \ref{sec:matrices}) that reduces the problem to studying pairs of subsets in $\Grass(n-2,n)$.  The technique in proving Theorem \ref{thm:n-2,n-2} is as follows: Given a point in the Grassmannian, we encode exactly which of its Pl\"ucker coordinates do and do not vanish in a simple graph.  Such graphs are called \emph{permissible} (see Section \ref{sec:pluckersToGraphs}).  We then define the notion of a permissible subvariety of the Grassmannian, along with its corresponding graph.  We prove and then use the properties of permissible graphs to compute the dimension of $\mathscr Y_{n,n-2,n-2}$.  

In Section \ref{sec:n-3} we suggest a natural extension of the techniques from Section \ref{sec:pluckersToGraphs} to the locally closed sets $\mathscr Y_{n,n-3,n-3}$.  More generally, the structure of $\mathscr Y_{n,t,t}$ turns out to be of great interest in its own right, in fact leading to questions that are NP-hard (see \cite{ford} the result cited therein from \cite{shor}), though such questions are beyond the scope of this paper.  

Finally, in Section \ref{sec:n=5} we go through the results of Section \ref{sec:conseq} for the case where $n=5$.  

\section{Preliminaries}\label{sec:matrices}

As a tool in studying the components of $\Var(\pr_t)$, we study the components of the locally closed sets $\mathscr Y_{n,r,t}\subset\Var(\pr_t)$, the $n\times n$ matrices of rank $r$ whose size $t$ principal minors vanish.  As $r$ ranges, the irreducible components of $\mathscr Y_{n,r,t}$ cover $\Var(\pr_t)$, hence so do their closures.  There are finitely many such closures; whenever any closed set is a finite union of closed varieties, we can find its irreducible components by making the union irredundant.  In other words, the components of $\Var(\pr_t)$ can be found by first finding the closures of all the components of the $\mathscr Y_{n,r,t}$, as $r$ varies, and then omitting the ones that are not maximal in the family.   

To study $\mathscr Y_{n,r,t}$ we observe and use its relationship to Grassmann varieties in the following way:  For any matrix $A$, let $\col A$ and $\row A$ denote, respectively, the column space and row space of $A$.  The Grassmann variety, or Grassmannian, $\Grass(r,n)\subset\Prj^{\binom{n}{r}-1}$, is the projective variety whose points are in bijection with the $r$-dimensional vector spaces of $K^n$.  As shorthand, we shall put $\mathscr G=\Grass(r,n)$ for fixed $r\leq n$.  Under the Pl\"ucker embedding, write 
\[\vect g=\left[g_{\{1,\dots,r\}}:\cdots:g_{\underbar i}:\cdots:g_{\{n-r+1,\dots,n\}}\right]\]
to denote a point in $\mathscr G$, so the Pl\"ucker coordinates are indexed by sets $\underbar i\subset\{1,\dots,n\}$.    

Suppose $\col\,B\in\mathscr G$ for some $n\times r$ matrix $B$ of full rank.  Letting $B(\underbar i)$ denote the submatrix of $B$ consisting of the rows indexed by elements in $\underbar i\subset\{1,\dots,n\}$, there exists such an $\underbar i=\{i_1,\dots,i_r\}$ satisfying $\det\,B(\underbar i)\neq 0$.  Thus we may perform row operations on $B$ to get a unique matrix $B'$, such that $\col\,B'=\col\,B$ and $B'(\underbar i)=\vect I_{r\times r}$, the $r\times r$ identity matrix.  We shall call $B'$ the \emph{normalized} form of $B$, or \emph{normalization}, with respect to $\underbar i$.  Analogously, if $\row\,C\in\mathscr G$ for some $r\times n$ matrix $C$, then we define the normalized form $C'$ of $C$ with respect to a set of column indices, $\underbar j=\{j_1,\dots,j_r\}$, provided the submatrix, $C(\underbar j)$ of $C$, consisting of the columns indexed by elements in $\underbar j$,  is non-singular.  The reason for considering both $n\times r$ and $r\times n$ representatives of $\Grass(r,n)$ is motivated by the following important observation:   

\begin{prop}\label{prop:GrassMap} Let $\mathscr Z_{n,r}\subset\Spec\,K[X]$ denote the set of $n\times n$ matrices of rank exactly $r$.  Then
\begin{equation}\label{eq:Theta}
\begin{split}
\Theta:\mathscr Z_{n,r} & \to\mathscr G\times\mathscr G \\
A & \mapsto\left(\col A,\row A\right).
\end{split}
\end{equation}
is a bundle map whose fibres are each isomorphic to $\GL(r,K)$. \end{prop}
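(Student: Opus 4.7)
The plan is to construct explicit Zariski-local trivializations of $\Theta$ over the standard affine open cover of $\mathscr G\times\mathscr G$. For each pair of $r$-element subsets $\underbar i,\underbar j\subset\{1,\dots,n\}$, let $U_{\underbar i}\subset\mathscr G$ denote the affine chart of $r$-planes whose Pl\"ucker coordinate indexed by $\underbar i$ is nonzero; then every $V\in U_{\underbar i}$ has a unique normalized representative $B'(V)$ with $B'(V)(\underbar i)=\vect I_{r\times r}$, and every $W\in U_{\underbar j}$ has a unique normalized $C'(W)$ with $C'(W)(\underbar j)=\vect I_{r\times r}$. The products $U_{\underbar i}\times U_{\underbar j}$ cover $\mathscr G\times\mathscr G$, so it suffices to trivialize $\Theta$ over each one.

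Over each chart, I would define the candidate trivialization
\begin{align*}
\Psi_{\underbar i,\underbar j}:U_{\underbar i}\times U_{\underbar j}\times\GL(r,K) &\to \Theta^{-1}(U_{\underbar i}\times U_{\underbar j}) \\
(V,W,G) &\mapsto B'(V)\cdot G\cdot C'(W),
\end{align*}
with candidate inverse $A\mapsto\bigl(\col A,\,\row A,\,A(\underbar i,\underbar j)\bigr)$, where $A(\underbar i,\underbar j)$ denotes the $r\times r$ submatrix of $A$ on rows $\underbar i$ and columns $\underbar j$. I would first verify that $\Psi_{\underbar i,\underbar j}$ lands in the preimage: invertibility of $G$ together with full rank of both normalized representatives forces $\col(B'(V)GC'(W))=V$, $\row(B'(V)GC'(W))=W$, and rank exactly $r$. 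Then I would use the normalization identities $B'(V)(\underbar i)=C'(W)(\underbar j)=\vect I_{r\times r}$ to see, via direct block computation, that the $(\underbar i,\underbar j)$-submatrix of $B'(V)\,G\,C'(W)$ equals $G$, which gives one half of the inversion.

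The main obstacle will be verifying that the inverse is well-defined, namely that $A(\underbar i,\underbar j)\in\GL(r,K)$ whenever $\col A\in U_{\underbar i}$ and $\row A\in U_{\underbar j}$, and that the assignment $A\mapsto\Psi_{\underbar i,\underbar j}(\col A,\row A,A(\underbar i,\underbar j))$ actually recovers $A$. I would handle this by first using the full column rank of $B'(\col A)$ to uniquely factor $A=B'(\col A)\cdot M$ for some $r\times n$ matrix $M$ (necessarily of rank $r$ with row space $\row A$), and then using the full row rank of $C'(\row A)$ to uniquely factor $M=G\cdot C'(\row A)$ for some $r\times r$ matrix $G$; a rank count forces $G\in\GL(r,K)$, and comparing $(\underbar i,\underbar j)$-blocks identifies $G=A(\underbar i,\underbar j)$. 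Since $\Psi_{\underbar i,\underbar j}$ and its inverse are polynomial in the normalized Pl\"ucker coordinates and the entries of $G$, with nonvanishing denominators on the chart, both are algebraic, so $\Theta$ is a Zariski-locally trivial fibration with fibre $\GL(r,K)$, as required.
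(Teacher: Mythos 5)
Your proposal is correct and takes essentially the same approach as the paper: trivialize $\Theta$ over the affine Pl\"ucker charts, normalize the column- and row-space representatives, and identify the fibre with the invertible $(\underbar i,\underbar j)$-submatrix via the factorization $A=B'\cdot G\cdot C'$. You merely spell out the block computation and the well-definedness of the inverse more explicitly than the paper's terse bijection argument.
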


\begin{proof} The sets where a specified Pl\"ucker coordinate does not vanish give an affine open cover of $\mathscr G$, and hence of $\mathscr G\times\mathscr G$.  Explicitly, $\mathscr G$ is covered by the open sets 
\[\mathscr G_{\underbar i}=\left\{\left[\cdots:g_{\underbar i}:\cdots\right]\in\mathscr G\,|\,g_{\underbar i}\neq 0\right\}\cong\Aff^{r(n-r)}.\]   

We will show, for each open set $\mathscr G_{\underbar i}\times\mathscr G_{\underbar j}$, that the diagram
\[\xymatrix{
\Theta^{-1}\left(\mathscr G_{\underbar i}\times\mathscr G_{\underbar j}\right) \ar@{->>}[d]_{\Theta} \ar[r]^{\cong} &
	\mathscr G_{\underbar i}\times\mathscr G_{\underbar j}\times\GL(r,K) \ar@{->>}[d]^{\pi} \\
\mathscr G_{\underbar i}\times\mathscr G_{\underbar j} \ar[r]^{=} & \mathscr G_{\underbar i}\times\mathscr G_{\underbar j}
}\]
commutes, where $\pi$ is the projection map.  The preimage of $\mathscr G_{\underbar i}\times\mathscr G_{\underbar j}$ consists of matrices $A\in\mathscr Z_{n,r}$ that factor 
\begin{equation}\label{eq:norm}
A=BC=B'A(\underbar i;\underbar j)C',
\end{equation}
where $B'$ is the normalization with respect to $\underbar i$ of the $n\times r$ matrix $B$, $C'$ is the normalization with respect to $\underbar j$ of the $r\times n$ matrix $C$, and $A(\underbar i;\underbar j)$ is the submatrix of $A$ consisting of its $\underbar i$-rows and $\underbar j$-columns.  By uniqueness of the normalizations, given fixed $\underbar i,\underbar j$, such pairs of matrices $(B',C')$ are in bijection with points in $\mathscr G_{\underbar i}\times\mathscr G_{\underbar j}$.  For any fixed pair $(B',C')$, the set of all possibilities for $A(\underbar i;\underbar j)$ that satisfy Equation (\ref{eq:norm}) is in bijection with $\GL(r,K)$.  The maps are clearly regular.  
\end{proof}

\subsection{Irreducible Components of $\mathscr Y_{n,t,t}$}\label{sec:r=t}

Fix $r=t$ and $\mathscr G=\Grass(t,n)$.  Suppose we factor $A\in\mathscr Y_{n,t,t}$ as in Equation (\ref{eq:norm}).  Then we must have $\underbar i\neq\underbar j$.  Furthermore, requiring the size $t$ principal minors of $A$ to vanish means, equivalently, the diagonal entries of the exterior power matrix $\wedge^tA$ must vanish.  Write
\[\wedge^tA=\left(\wedge^tB\right)\cdot\left(\wedge^tA(\underbar i;\underbar j)\right)\cdot\left(\wedge^tC\right).\]
Each of the factors $\wedge^tB,\wedge^tC$ are, respectively, column and row vectors, while $\wedge^tA(\underbar i;\underbar j)$ is a (non-zero) scalar.  

Up to sign, the Pl\"ucker coordinates of the column space of any $n\times t$ matrix of full rank, $t\leq n$, are the coordinates of the exterior product of the columns with respect to the basis $\{\vect e_{i_1}\wedge\cdots\wedge\vect e_{i_t}\,|\,1\leq i_1<\cdots<i_t\leq n\}$, where for $i\in\{1,\dots,n\}$, $\vect e_i$ is the standard basis vector in $K^n$ given by the $i$th row of the identity matrix; the analogous statement holds for the row space of any $t\times n$ matrix of full rank.  It follows that the principal $t$-minors of a matrix $A\in\mathscr Z_{n,t}$ vanish if and only if the component-wise product of $\wedge^tB$ and $\wedge^tC$ is zero.  

The inclusion $\mathscr Y_{n,t,t}\hookrightarrow\mathscr{Z}_{n,t}$ induces, via Equation (\ref{eq:Theta}), a bundle map:
\begin{equation}\label{eq:bundleDiagram}
\begin{gathered}
\xymatrix{
\mathscr Y_{n,t,t} \ar @{^{(}->}[r] \ar @{->>}[d] & \mathscr Z_{n,t} \ar @{->>}[d]^{\Theta} \\
\Theta(\mathscr Y_{n,t,t}) \ar @{^{(}->}[r] & \mathscr G\times\mathscr G
}
\end{gathered}
\end{equation}
From (\ref{eq:bundleDiagram}), let $\mathscr H\subseteq\mathscr G\times\mathscr G$ denote the closed set consisting of pairs $(\vect g,\vect h)$ where for each index $\underbar i$, either $g_{\underbar i}$ or $h_{\underbar i}$ vanishes.  Then $\mathscr Y_{n,t,t}=\Theta^{-1}(\mathscr H)$, and since $\Theta$ is a bundle map, the components of $\mathscr Y_{n,t,t}$ correspond bijectively to the components of $\mathscr H$.  

To get an irreducible component of $\mathscr H$, we must partition the set of indices for the Pl\"ucker coordinates into two sets, $\underbar I,\underbar J$.  Let $\Var(\underbar I),\Var(\underbar J)$ denote the respective closed subsets of $\mathscr G$ defined by the vanishing of Pl\"ucker coordinates respectively indexed by $\underbar I,\underbar J$.  Each component of $\mathscr H$ must be a component of $\Var(\underbar I)\times\Var(\underbar J)$ for some partition $\underbar I\coprod\underbar J$ and every component of $\Var(\underbar I)\times\Var(\underbar J)$ arises as the product of a component of $\Var(\underbar I)$ and a component of $\Var(\underbar J)$.  Our goal is to consider all such partitions $\underbar I,\underbar J$, and then for each component $\mathscr C$ of $\Var(\underbar I)$ and each component $\mathscr D$ of $\Var(\underbar J)$, we shall consider the irreducible set $\mathscr C\times\mathscr D$.  The components of $\mathscr H$ are the maximal such sets $\mathscr C\times\mathscr D$, and their inverse images under the bundle map $\Theta$ give the irreducible components of $\mathscr Y_{n,t,t}$.

\subsection{Group Actions on $\pr_t$}\label{sec:obs}

In order to justify some of the arguments of preserving generality in what follows, we briefly remark on the group actions on $\pr_t$ which leave it invariant.  The most useful is given by the symmetric group of degree $n$.  Suppose $\tau$ is a size $n$ permutation matrix, $\tau\transpose$ its transpose. The action $X\mapsto\tau X\tau\transpose$ performs the same permutation on the rows of $X$ as it does the columns, hence preserves $\pr_t$.  The obvious action $\Z/2\Z$ given by $X\mapsto X\transpose$ preserves $\pr_t$ as well.  Finally, $\pr_t$ is unaffected by scalar multiplication, i.e., the action of $\GL(1,K)\cong K^{\times}$ on each row and each column of $X$.

\section{Principal $(n-2)$-Minors Case}\label{sec:n-2}

The problem of understanding all the components of the various sets $\Var(\underbar I)$, as described in Section \ref{sec:r=t}, is known to be extremely hard (as mentioned in \cites{ford,shor}).  However, we shall be able to understand the situation completely when $r=t=n-2$.  Since an $(n-2)$-minor of an $n\times(n-2)$ matrix is determined by the two rows that are not used we can alternatively describe $\underbar I$ using a collection of 2 element subsets of $\{1,\dots,n\}$, and then encode that data into a simple graph as follows: label the vertices $\{v_1,\dots,v_n\}$ and draw an edge between two vertices $v_{i_1},v_{i_2}$ if and only if $\{1,\dots,n\}\setminus\{i_1,i_2\}$ is not in any set in $\underbar I$.  We can associate an analogous graph to a set $\underbar J$ of indices corresponding to minors of an $(n-2)\times n$ matrix.

We shall see that a component of a closed set of the form $\Var(\underbar I)$ can also be encoded into a graph, and we will give a condition on its graph that is equivalent to irreducibility.  We will then classify the minimal pairs of graphs that together cover the complete graph of order $n$ and such that each graph corresponds to an irreducible closed set in $\Grass(n-2,n)$.  In Section \ref{sec:n=5} we work through an explicit case, $n=5$.  Throughout this section $\mathscr G$ shall denote the Grassmann variety, $\Grass(n-2,n)$, under the Pl\"ucker embedding.  We also assert $n\geq 3$.  

\subsection{Basic Graph Theory Definitions}

Before proceeding, we recall some basic graph theory notions:  A graph is \emph{order} $n$ means it has $n$ vertices.  Two vertices in a graph are \emph{adjacent} means there is an edge joining them.  The \emph{degree} of a vertex $v$ is the number of edges incident to it, where a \emph{loop}, an edge joining $v$ to itself, counts as two edges.  A vertex is \emph{isolated} means it has no edges. A graph is \emph{simple} means every edge joins exactly two vertices (i.e., there are no loops) and any two vertices are joined by at most one edge (i.e., there are no \emph{parallel edges}).  All graphs to which we refer from now on are simple.  

Suppose $G$ is a graph of order $n$.  A vertex in $G$ is \emph{dominating} means it has degree $n-1$, i.e., it is adjacent to every other vertex in $G$.  $G$ is \emph{complete} means all of its vertices are dominating.  For any $a$, we use $K_a$ to denote the complete graph of order $a$.  A \emph{clique} is a complete subgraph $K_a\subseteq G$.  A \emph{maximal clique} of order $a$ is a subgraph $K_a\subseteq G$ that is not properly contained in any clique.  A collection of simple graphs of order $n$ is a \emph{cover} (or \emph{covering}) means the union of their edges gives $K_n$.  The \emph{complement} $H$ of $G$ is the unique graph that, with $G$, gives a cover.    

\subsection{Pl\"ucker Coordinates to Graphs}\label{sec:pluckersToGraphs}
 
Let $G=\Graph(\vect g)$, for a fixed point $\vect g\in\mathscr G$, where we construct $\Graph(\vect g)$ as follows:  $G$ consists of vertices $v_1,\dots,v_n$, where $v_{i_1}$ and $v_{i_2}$ are adjacent if and only if the Pl\"ucker coordinate $g_{\underbar i}$, where $\underbar i=\{1,\dots,n\}\setminus\{i_1,i_2\}$, vanishes.  If $A$ is a matrix whose column span (resp., row span) is $\vect g$, then we write $\Graph(A)=\Graph(\col\,A)$ (resp., $\Graph(A)=\Graph(\row\,A)$).  Apparently such graphs are not in bijection with the simple graphs of order $n$.  

\begin{dfn}\label{def:perm} A graph $G$ of order $n$ is \emph{permissible} means 
\begin{enumerate}[(a)]
\item $G$ has at most $\binom{n}{n-2}-1$ edges, i.e., $G$ is not complete, and 
\item\label{dfn:omit-dom} its subgraph obtained by omitting all dominating vertices is a disjoint union of maximal cliques.
\end{enumerate}
\end{dfn}

Figure \ref{fig:permissGraphs} shows examples of graphs which are permissible and Figure \ref{fig:antiPermiss} shows examples of graphs which are not permissible.  Condition (\ref{dfn:omit-dom}) in Definition \ref{def:perm} gives a way to construct permissible graphs; the following is an equivalent condition that is useful in identifying permissible graphs.

\begin{prop}\label{prop:equiv-perm} Suppose $G$ is a non-complete graph of order $n$.  Then the following are equivalent:
\begin{enumerate}[(i)]
\item\label{prop:equiv-perm1} The subgraph of $G$ obtained by omitting all dominating vertices is a disjoint union of cliques.
\item\label{prop:equiv-perm2} A vertex $v\in G$ has degree $d\neq n-1$ if and only if $v$ is part of a maximal clique of order $d$. 
\end{enumerate}
\end{prop}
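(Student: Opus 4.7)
The plan is to reduce everything to the induced subgraph $G' := G \setminus D$, where $D$ denotes the set of dominating vertices of $G$ and $k := |D|$. Two bookkeeping facts carry most of the argument: (a) every non-dominating vertex $v$ is adjacent in $G$ to all of $D$, so $\deg_G(v) = k + \deg_{G'}(v)$; and (b) any clique of $G$ that meets $V(G')$ has the form $D \cup C$ for a clique $C$ of $G'$, and is maximal in $G$ if and only if $C$ is maximal in $G'$. The hypothesis that $G$ is not complete guarantees $G' \ne \emptyset$.

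For (\ref{prop:equiv-perm1}) $\Rightarrow$ (\ref{prop:equiv-perm2}), assume $G'$ decomposes as a disjoint union of cliques. For each $v \in V(G')$, let $C_v$ be the component clique containing $v$. Since $C_v$ is maximal in $G'$, by (b) the set $D \cup C_v$ is the unique maximal clique of $G$ through $v$, of order $k + |C_v| = k + \deg_{G'}(v) + 1 = \deg_G(v) + 1$. Dominating vertices have degree $n-1$ and are excluded from (\ref{prop:equiv-perm2}), so the biconditional holds.

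For (\ref{prop:equiv-perm2}) $\Rightarrow$ (\ref{prop:equiv-perm1}), pick $v \in V(G')$ with $\deg_G(v) = d$ and let $K$ be the maximal clique of $G$ through $v$ prescribed by (\ref{prop:equiv-perm2}). By (b), $K = D \cup K_0$ with $K_0$ a maximal clique of $G'$ containing $v$; comparing orders and using (a) gives $|K_0| = \deg_{G'}(v) + 1$, forcing $K_0 = \{v\} \cup N_{G'}(v)$. The key step is to show $K_0$ equals the connected component of $v$ in $G'$. Given any $w \in K_0$, applying (\ref{prop:equiv-perm2}) to $w$ yields that $\{w\} \cup N_{G'}(w)$ is a (maximal) clique in $G'$; since $v \in K_0 \setminus \{w\} \subseteq N_{G'}(w)$, any $u \in N_{G'}(w) \setminus K_0$ would be adjacent to $w$ but not to $v$ (as $u \notin N_{G'}(v)$), contradicting the clique property of $\{w\} \cup N_{G'}(w)$. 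Hence $N_{G'}(w) = K_0 \setminus \{w\}$, and every vertex of $K_0$ has all its $G'$-neighbors inside $K_0$. Thus $K_0$ is a full connected component of $G'$ and is a clique; ranging over $v$ shows $G'$ is a disjoint union of cliques.

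The main obstacle is the closure step in the reverse direction: condition (\ref{prop:equiv-perm2}) must be leveraged for every member of $K_0$, not just $v$, to rule out edges of $G'$ leaving $K_0$. Everything else is a matter of bookkeeping once (a) and (b) are in place.
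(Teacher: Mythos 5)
Your proof is correct in substance and follows the same core approach as the paper: strip off the set $D$ of dominating vertices, work in $G' = G \setminus D$, and use your two bookkeeping facts to pass between degrees and clique orders in $G$ versus $G'$. Your treatment is more thorough than the paper's in two respects. First, you give both directions explicitly, whereas the paper proves only (ii) $\Rightarrow$ (i) and dismisses (i) $\Rightarrow$ (ii) as ``even more immediate.'' Second, in the (ii) $\Rightarrow$ (i) direction you apply hypothesis (ii) to every vertex $w$ of $K_0$, not just the original $v$; this is exactly what is needed to show $K_0$ is closed under taking $G'$-neighbors and hence is a whole connected component. The paper handles this step with a somewhat opaque ``without loss of generality'' that does not obviously address edges leaving the clique from vertices other than $v$, and you correctly identify the closure step as the real content of the reverse implication.

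One thing you should flag rather than paper over: your computation yields that a non-dominating vertex $v$ of degree $d$ lies in a unique maximal clique of order $d+1$, not $d$ as the statement of (ii) reads. This is an off-by-one slip in the statement itself (a clique on $a$ vertices contributes degree $a-1$ to each of its members), and the paper's own proof of (ii) $\Rightarrow$ (i) carries the same internal inconsistency: it declares $|V| = d$ and then concludes $\deg(v) \geq d+1$ from a single extra edge incident to $v$, which would only follow if $|V| = d+1$. You silently work with the corrected version, which is the right thing to do, but you then assert ``the biconditional holds'' as though there were no discrepancy; a reader comparing your $|C_v| = \deg_G(v) + 1$ with the stated ``order $d$'' will be confused. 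It would be better to note explicitly that (ii) should read ``maximal clique of order $d+1$'' and then prove the corrected statement.
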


\begin{proof} We shall show (\ref{prop:equiv-perm2}) implies (\ref{prop:equiv-perm1}).  The reverse implication is even more immediate.  We also note if $G$ has no dominating vertices then (\ref{prop:equiv-perm1}) immediately follows from (\ref{prop:equiv-perm2}).  Therefore, we shall assert $G$ has $m\geq 1$ dominating vertices.  Let $G'\subset G$ denote the subgraph obtained by omitting them.   

Choose a vertex $v\in G$ with degree $d\neq n-1$ and let $V\subseteq G$ denote the order $d$ maximal clique containing it.  The subgraph $V'=V\cap G'$ is also a maximal clique in $G'$, of order $d-m$.  Assume, without loss of generality, that $v$ is adjacent to a vertex $w\in G'\setminus V'$, so that $V'$ does not comprise a connected component of $G'$.  Then since $w$ is not dominating in $G$, nor is $w\in V$, the degree of $v$ must be at least $d+1$, a contradiction.  \end{proof}

\begin{prop} The graph $\Graph(\vect g)$ associated to a point $\vect g\in\mathscr G$ is permissible. \end{prop}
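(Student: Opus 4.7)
The plan is to use Plücker duality to give $\vect g$ a concrete representation. Write $\vect g = \ker M$ for some rank-$2$ matrix $M\in K^{2\times n}$. By the standard duality between $\Grass(2,n)$ and $\Grass(n-2,n)$, the Plücker coordinate $g_{\underbar i}$ agrees, up to a common nonzero scalar and sign, with the $2\times 2$ minor of $M$ on the columns indexed by $\{i_1,i_2\}=\{1,\dots,n\}\setminus\underbar i$. Writing $m_1,\dots,m_n\in K^2$ for the columns of $M$, the edge $\{v_{i_1},v_{i_2}\}$ therefore lies in $\Graph(\vect g)$ if and only if $m_{i_1}$ and $m_{i_2}$ are linearly dependent in $K^2$.

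With this dictionary in hand, both conditions of Definition \ref{def:perm} become elementary. For (a), since $M$ has rank $2$, some pair of columns is linearly independent, giving a non-edge, so $\Graph(\vect g)$ is not complete. For (b), I would first show that $v_i$ is dominating iff $m_i=0$: if $m_i\neq 0$ were dominating, every $m_j$ would lie in $\operatorname{span}(m_i)$, forcing $\operatorname{rank}M\leq 1$, a contradiction. Consequently the non-dominating vertices correspond exactly to the indices of nonzero columns, and on nonzero vectors in $K^2$ the relation ``linearly dependent'' is exactly the equivalence relation ``spans the same line through the origin.'' This partitions the non-dominating indices into equivalence classes; each class yields a clique, no edges connect distinct classes, and these cliques are the connected components of the non-dominating subgraph, hence automatically maximal within it.

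The only real obstacle is the upfront verification that $g_{\underbar i}$ vanishes iff the $2\times 2$ minor of $M$ on columns $\{i_1,i_2\}$ vanishes. This is a standard consequence of Plücker duality, and since the vanishing pattern of Plücker coordinates is intrinsic to the point $\vect g\in\mathscr G$, the precise proportionality constant is irrelevant. Once that is in place, the argument reduces to a single observation about $n$ vectors in $K^2$, and there is no further difficulty. An alternative algebraic route would use the Plücker relation $g_{\{i,j\}}g_{\{k,l\}}-g_{\{i,k\}}g_{\{j,l\}}+g_{\{i,l\}}g_{\{j,k\}}=0$ (with indices in the dual notation) to propagate adjacency among non-dominating vertices, but this would require the base case $n=3$ to be handled separately; the geometric approach treats all $n\geq 3$ uniformly.
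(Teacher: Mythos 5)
Your proof is correct, and it takes a genuinely cleaner route than the paper's. The paper fixes a nonvanishing Pl\"ucker coordinate, normalizes the $n\times(n-2)$ matrix $A$ so that the top block is $I_{n-2}$, and then reasons directly about the entries and $2\times 2$ minors of the complementary $2\times(n-2)$ block $A'$; because the two ``normalizing'' rows $v_{n-1},v_n$ are encoded by single entries of $A'$ while the other vertices are encoded by its $2$-minors, the argument necessarily splits into several cases (is $v_i$ adjacent to $v_{n-1}$, to $v_n$, to neither, to both, etc.) and is fairly hard to follow. Your dualization to a $2\times n$ matrix $M$ with $\ker M=\vect g$ treats all $n$ indices symmetrically: adjacency of $v_i,v_j$ becomes linear dependence of the columns $m_i,m_j\in K^2$, and then (a) is rank $\geq 2$, ``dominating'' is $m_i=0$, and (b) is the fact that on nonzero vectors in $K^2$ ``spans the same line'' is an equivalence relation whose classes are the connected components of the non-dominating subgraph, hence maximal cliques. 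The paper's normalized $A'$ is in fact the matrix $(-A'\ I_2)=M$ in disguise, so the two proofs are computing the same thing, but your reformulation removes the asymmetry and the case split. The one step you lean on is the sign-correct complementary duality $\Grass(n-2,n)\leftrightarrow\Grass(2,n)$; as you note, only the vanishing pattern matters, so the proportionality constant is irrelevant, and this is indeed standard.
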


\begin{proof} At least one Pl\"ucker coordinate $g_{\underbar i}$ does not vanish, so $G$ is not a complete graph.  We will show (\ref{prop:equiv-perm2}) from Proposition \ref{prop:equiv-perm} holds for $G$, as well.

Without loss of generality, because of the group actions described in Section \ref{sec:obs}, assert the Pl\"ucker coordinate $g_{\{1,\dots,n-2\}}$ is non-zero.  We can write a matrix  
\begin{center}
\includegraphics[scale=1.05]{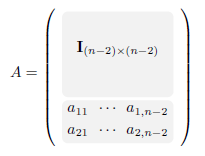}
\end{center}
so that $\col A=\vect g$, and two vertices $v_i,v_j\in G$ are adjacent if and only if the submatrix of $A$ obtained by omitting the $i$th and $j$th rows is singular.  For example, it is obvious that $v_{n-1}$ and $v_n$ are not adjacent.  We also note how every Pl\"ucker coordinate corresponds to either an entry or a 2-minor of the submatrix  
\[A'=\begin{pmatrix}a_{11} & \cdots & a_{1,n-2} \\
	a_{21} & \cdots & a_{2,n-2}
	\end{pmatrix}.\]
We now, again without loss of generality, consider the respective degrees of the vertices $v_1$ and $v_n$. 

First consider $v_i$, where $i\in\{1,\dots,n-2\}$.  If $v_i$ is not adjacent to either of $v_{n-1},v_n$ then in $A'$ some collection of 2-minors, all involving the $i$th column, must vanish.  But the two ways this happens are either $a_{1i}=a_{2i}=0$ and equivalently, $v_i$ is dominating, contradicting the hypothesis $v_i$ is not adjacent to $v_{n-1}$ or $v_n$; or, if all Pl\"ucker coordinates involving entries from columns whose indices give vertices adjacent to $v_i$ vanish, in which case, such vertices, along with $v_i$, form a maximal clique.  

Suppose, other the other hand, $v_i$ is adjacent to, say, $v_{n-1}$.  We already know $v_n$ is not adjacent to $v_{n-1}$, and we have 
\[a_{2i}=\det\,A(\{1,\dots,n\}\setminus\{i,n-1\})=0.\]  
If $v_i$ is also adjacent to $v_n$ then $a_{1i}=0$, in which case the $i$th column of $A'$ vanishes and equivalently, $v_i$ is a dominating vertex.  Suppose then, that $v_i$ is adjacent to $v_{n-1}$ but not to $v_n$.  We claim the vertices adjacent to $v_{n-1}$, along with $v_{n-1}$, must form a clique.  If two such entries $a_{2i},a_{2j}$ vanish, i.e., $v_i,v_j$ are both adjacent to $v_{n-1}$, then so does the 2-minor $a_{1i}a_{2j}-a_{1j}a_{2i}$; this is exactly the condition for $v_i$ and $v_j$ to be adjacent.  Finally, we point out all of the above analysis for $v_{n-1}$ applies analogously to $v_n$.  It follows that all of the vertices in $G$ are either dominating or part of a clique.  \end{proof}

\begin{figure}\centering{
\begin{tikzpicture}[every node/.style={circle,draw,fill=black,inner sep=0pt,minimum width=4pt}]
\node (13v1) {};
\node (13v2) at ($(13v1)+(30:1)$) {};
\node (13v3) at ($(13v1)+(-30:1)$) {};
\draw (13v1)--(13v2);
\node (23v1) at ($(13v1)+(2.5,0)$) {};
\node (23v2) at ($(23v1)+(30:1)$) {};
\node (23v3) at ($(23v1)+(-30:1)$) {};
\draw (23v1)--(23v2)--(23v3);
\node (14v1) at ($(13v1)+(-2,-2)$) {};
\node (14v2) at ($(14v1)+(30:1)$) {};
\node (14v3) at ($(14v1)+(-45:1)$) {};
\node (14v4) at ($(14v1)+(-10:1.5)$) {};
\draw (14v1)--(14v2) (14v3)--(14v4);
\node (24v1) at ($(14v1)+(3,0)$) {};
\node (24v2) at ($(24v1)+(30:1)$) {};
\node (24v3) at ($(24v1)+(-45:1)$) {};
\node (24v4) at ($(24v1)+(-10:1.5)$) {};
\draw (24v1)--(24v2)--(24v4) (24v2)--(24v3);
\node (34v1) at ($(14v1)+(6,0)$) {};
\node (34v2) at ($(34v1)+(30:1)$) {};
\node (34v3) at ($(34v1)+(-45:1)$) {};
\node (34v4) at ($(34v1)+(-10:1.5)$) {};
\draw (34v1)--(34v2)--(34v3)--(34v1);
\node (15v1) at ($(13v1)+(-3.5,-4.5)$) {};
\node (15v2) at ($(15v1)+(50:1)$) {};
\node (15v3) at ($(15v1)+(10:1.5)$) {};
\node (15v4) at ($(15v1)+(-25:1.5)$) {};
\node (15v5) at ($(15v1)+(-65:0.9)$) {};
\draw (15v1)--(15v2) (15v1)--(15v3) (15v1)--(15v4) (15v1)--(15v5);
\node (25v1) at ($(15v1)+(3,0)$) {};
\node (25v2) at ($(25v1)+(50:1)$) {};
\node (25v3) at ($(25v1)+(10:1.5)$) {};
\node (25v4) at ($(25v1)+(-25:1.5)$) {};
\node (25v5) at ($(25v1)+(-65:0.9)$) {};
\draw (25v1)--(25v2) (25v1)--(25v3)--(25v2) (25v1)--(25v4)--(25v2) (25v1)--(25v5)--(25v2);
\node (35v1) at ($(15v1)+(6,0)$) {};
\node (35v2) at ($(35v1)+(50:1)$) {};
\node (35v3) at ($(35v1)+(10:1.5)$) {};
\node (35v4) at ($(35v1)+(-25:1.5)$) {};
\node (35v5) at ($(35v1)+(-65:0.9)$) {};
\draw (35v1)--(35v2)--(35v3)--(35v1) (35v1)--(35v4) (35v1)--(35v5);
\node (45v1) at ($(15v1)+(9,0)$) {};
\node (45v2) at ($(45v1)+(50:1)$) {};
\node (45v3) at ($(45v1)+(10:1.5)$) {};
\node (45v4) at ($(45v1)+(-25:1.5)$) {};
\node (45v5) at ($(45v1)+(-65:0.9)$) {};
\draw (45v1)--(45v2)--(45v3)--(45v4)--(45v1)--(45v3) (45v2)--(45v4);
\end{tikzpicture}
\caption{Permissible graphs of orders 3,4,5.}\label{fig:permissGraphs}
}\end{figure}
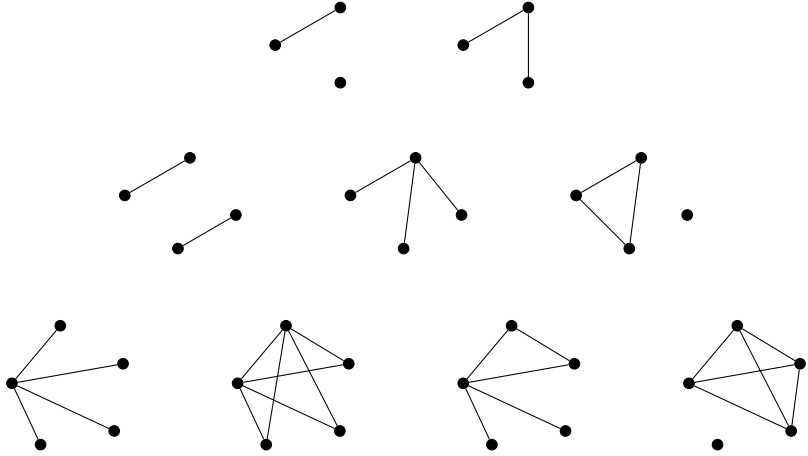

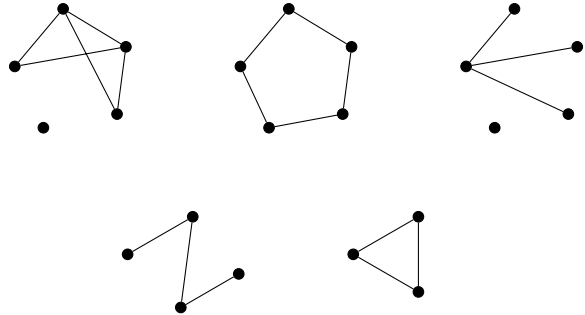
\begin{figure}\centering{
\begin{tikzpicture}[every node/.style={circle,draw,fill=black,inner sep=0pt,minimum width=4pt}]
\node (15v1) {};
\node (15v2) at ($(15v1)+(50:1)$) {};
\node (15v3) at ($(15v1)+(10:1.5)$) {};
\node (15v4) at ($(15v1)+(-25:1.5)$) {};
\node (15v5) at ($(15v1)+(-65:0.9)$) {};
\draw (15v1)--(15v2)--(15v3)--(15v4) (15v1)--(15v3) (15v2)--(15v4);
\node (25v1) at ($(15v1)+(3,0)$) {};
\node (25v2) at ($(25v1)+(50:1)$) {};
\node (25v3) at ($(25v1)+(10:1.5)$) {};
\node (25v4) at ($(25v1)+(-25:1.5)$) {};
\node (25v5) at ($(25v1)+(-65:0.9)$) {};
\draw (25v1)--(25v2)--(25v3)--(25v4)--(25v5)--(25v1);
\node (35v1) at ($(15v1)+(6,0)$) {};
\node (35v2) at ($(35v1)+(50:1)$) {};
\node (35v3) at ($(35v1)+(10:1.5)$) {};
\node (35v4) at ($(35v1)+(-25:1.5)$) {};
\node (35v5) at ($(35v1)+(-65:0.9)$) {};
\draw (35v1)--(35v2) (35v1)--(35v3) (35v1)--(35v4);
\node (14v1) at ($(15v1)+(1.5,-2.5)$) {};
\node (14v2) at ($(14v1)+(30:1)$) {};
\node (14v3) at ($(14v1)+(-45:1)$) {};
\node (14v4) at ($(14v1)+(-10:1.5)$) {};
\draw (14v1)--(14v2)--(14v3)--(14v4);
\node (13v1) at ($(14v1)+(3,0)$) {};
\node (13v2) at ($(13v1)+(30:1)$) {};
\node (13v3) at ($(13v1)+(-30:1)$) {};
\draw (13v1)--(13v2)--(13v3)--(13v1);
\end{tikzpicture}
\caption{These graphs are not permissible.}\label{fig:antiPermiss}
}\end{figure}

It is clear the set of order $n$ permissible graphs injects into $\Grass(n-2,n)$.  Given a permissible graph $G$, the set of points in $\mathscr G$ with that graph is locally closed.  Its closure is all points whose graph contains $G$.

\begin{dfn} A subvariety $\mathscr S\subseteq\mathscr G$ is \emph{permissible} means it is the closure of the set of all points with the same fixed permissible graph, which we denote $\Graph(\mathscr S)$.  \end{dfn}

Recall, given a collection $\underbar I$ of $(n-2)$-element subsets of $\{1,\dots,n\}$, we let $\Var(\underbar I)$ denote the vanishing set of the Pl\"ucker coordinates indexed by the sets in $\underbar I$.  Using a known observation about ideals generated by minors of a $2\times s$ matrix, we show, with $s=n-2$, that the components of $\Var(\underbar I)$ are permissible.  The observation is as follows: if two overlapping 2-minors of a generic $2\times s$ matrix vanish, then either the two entries in the overlap vanish or else the third minor among the three columns in question must vanish. 

\begin{prop}\label{prop:pluckgens} In $\mathscr G=\Grass(n-2,n)$, the irreducible components of $\Var(\underbar I)$, where $\underbar I\subset\{1,\dots,n\}$ has cardinality $n-2$, are permissible. \end{prop}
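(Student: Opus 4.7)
The plan is to show that every irreducible component $\mathscr C$ of $\Var(\underbar I)$ coincides with the closure of the locally closed set of points sharing a fixed permissible graph.

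First I would reduce to an affine chart. By the $S_n$-action from Section \ref{sec:obs}, I may assume $\mathscr C$ meets the affine open $\mathscr G_{\{1,\dots,n-2\}}$. Points of $\mathscr G_{\{1,\dots,n-2\}}$ are uniquely represented by the $2\times(n-2)$ matrix $A'$ introduced in the proof of the previous proposition, whose entries give coordinates on $\Aff^{2(n-2)}$; and every Pl\"ucker coordinate is, up to sign, either an entry of $A'$, a $2 \times 2$ minor of $A'$, or the constant $1$. Hence $\Var(\underbar I) \cap \mathscr G_{\{1,\dots,n-2\}}$ is cut out in $\Aff^{2(n-2)}$ by the vanishing of a specified collection $E$ of entries of $A'$ together with a specified collection of $2\times 2$ minors $M_{ij}$ of $A'$.

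Next I would decompose this affine variety using the overlapping-minor identity recalled just before the statement. Whenever two prescribed minors $M_{ij}, M_{jk}$ share the column index $j$,
\[\Var(M_{ij}, M_{jk}) \;=\; \Var(M_{ij}, M_{jk}, M_{ik}) \;\cup\; \Var(a_{1j}, a_{2j}).\]
Applying this (and its obvious entry-theoretic analogues) iteratively, $\Var(\underbar I) \cap \mathscr G_{\{1,\dots,n-2\}}$ decomposes as a finite union of closed subvarieties indexed by the binary choices at each overlap. Each resulting subvariety is parametrized by two pieces of combinatorial data: a subset $D \subseteq \{1,\dots,n\}$ of indices forced to be ``dominating'' (columns of $A'$ set identically to zero, or one of the indices $n-1, n$ forced so by the vanishing of the corresponding row of $A'$), and a partition $\Pi$ of $\{1,\dots,n-2\}\setminus D$ into groups whose columns are forced to be pairwise proportional. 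The corresponding subvariety is irreducible since its generic stratum is parametrized independently, one group at a time, by a line in $\Prj^1$ together with an affine scalar for each column in the group.

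Reading off the graph of a generic point of each irreducible piece produces a graph in which the vertices indexed by $D$ are dominating and the non-dominating vertices break up as a disjoint union of maximal cliques indexed by $\Pi$; by Proposition \ref{prop:equiv-perm} this graph is permissible. Since $\mathscr C$ is irreducible and meets $\mathscr G_{\{1,\dots,n-2\}}$, it equals the closure in $\mathscr G$ of one of these pieces, and so is the closure of the locally closed set of points with a fixed permissible graph $G$, i.e., a permissible subvariety. The main obstacle I anticipate is bookkeeping the iterative decomposition: one must order the binary splittings so that the final list of pieces corresponds bijectively to the data $(D,\Pi)$ without redundancy, and simultaneously track the asymmetric role of $v_{n-1}, v_n$ in the chosen chart, where edges incident to them come from vanishing entries of $A'$ rather than vanishing $2 \times 2$ minors.
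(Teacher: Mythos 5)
Your plan matches the paper's proof in its essentials: restrict to an affine chart, encode everything in terms of the entries and $2\times 2$ minors of the $2\times(n-2)$ matrix $A'$, and iterate the overlapping-minor decomposition to show that every minimal prime of $\Var(\underbar I)$ is itself generated by Pl\"ucker coordinates, whence the components are permissible. Your systematic organization of the pieces by combinatorial data $(D,\Pi)$, with a parametrization argument for irreducibility, is a tidier packaging than the paper's pair-by-pair case analysis, and the two approaches buy essentially the same thing.

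However, the data $(D,\Pi)$ as you have written it is not quite right, and this is more than bookkeeping. In the chart $\mathscr G_{\{1,\dots,n-2\}}$ the vertices $v_{n-1}$ and $v_n$ are \emph{never} adjacent (their edge would require $g_{\{1,\dots,n-2\}}=0$), so neither can be dominating; thus $D\subseteq\{1,\dots,n-2\}$ always, and your clause allowing $n-1$ or $n$ into $D$ when a row of $A'$ vanishes is wrong --- when a row of $A'$ vanishes that vertex lands in a clique of order at most $n-1$, not a dominating vertex. Correspondingly, $\Pi$ must be a partition of $\{1,\dots,n\}\setminus D$, not of $\{1,\dots,n-2\}\setminus D$: a clique may include $v_{n-1}$ (when its columns share the direction $(1,0)\transpose$, i.e.\ their second-row entries vanish) or $v_n$ (direction $(0,1)\transpose$), but never both. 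With $D$ and $\Pi$ corrected in this way, each piece is indeed irreducible by the parametrization you describe (with the group containing $n-1$ or $n$ having its $\Prj^1$-direction pinned), its generic graph is permissible by Proposition~\ref{prop:equiv-perm}, and the rest of your argument goes through.
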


\begin{proof} Recall, $\mathscr G$ is covered by open affine sets $\mathscr G_{\underbar i}$, where the $\underbar i$th Pl\"ucker coordinate does not vanish.  We shall prove the result in the affine case, i.e., for $\mathscr V=\Var(\underbar I)\cap\mathscr G_{\underbar i}$.  We can choose matrices normalized with respect to $\underbar i$ whose column spaces represent points in $\mathscr V$.  For each class of sets $\underbar I$ not containing $\underbar i$, we construct $G=\Graph(\col A)$, having supposed $\col A\in\Var(\underbar I)$.  

We claim the components of $\mathscr V$ are in bijection with the minimal possible graphs associated to its points, in the sense that removing any edge from such a graph gives a graph to which no point in $\mathscr V$ is associated.  Let $I$ denote the homogeneous defining ideal for $\mathscr V$, generated by the Pl\"ucker variables with indices in $\underbar I$.    

Write $\underbar i=\{1,\dots,n\}\setminus\{i,j\}$ and choose $\vect g\in\mathscr V$.  The Pl\"ucker coordinates for $\vect g$ are, up to a scalar, the minors of the submatrix $A\left(\{i,j\};\{1,\dots,n-2\}\right)$.  Let
\[\begin{split}
A' &=A\left(\{i,j\};\{1,\dots,n-2\}\right) \\
&=\begin{pmatrix}a_{i1} & \cdots & a_{i,n-2} \\
	a_{j1} & \cdots & a_{j,n-2}\end{pmatrix}.
\end{split}\]
We now look for instances where such a matrix $A$, i.e., with $\col A=\vect g$, has a minimal graph.  If two algebraically independent Pl\"ucker coordinates vanish then $\Graph(A)$ does not require any more edges than the two given.  We must consider the cases where non-algebraically independent Pl\"ucker coordinates vanish, since the Pl\"ucker relations may require the vanishing of additional Pl\"ucker coordinates. 

Suppose two overlapping 2-minors of $A'$ vanish.  This happens if  either the third 2-minor in the three involved columns vanishes, or the two entries in the overlapping column vanish.  This first case corresponds to a triangle (3-cycle) in $\Graph(A)$.  In the other case the vertex whose index matches the vanishing column is dominating.  On the other hand, two non-overlapping 2-minors of $A'$ are algebraically independent of each other, so the homogeneous ideal generated by their respective Pl\"ucker coordinates is prime.  

The next case we consider is when some collection of entries of $A'$ vanish.  If two entries in the same column vanish then all 2-minors involving that column vanish and the corresponding vertex in $\Graph(A)$ is dominating.  If two entries in the same row vanish then the 2-minor involving those entries vanishes and this is reflected in $\Graph(A)$ as a triangle.  Note, the entries of $A'$ themselves are algebraically independent of each other.  If there are no other generators for $I$, then $\Var(\underbar I)$ is a permissible subvariety.   

The final case to consider is when we suppose an entry $a$ of $A'$ and a 2-minor $\mu$ of $A'$, containing $a$, vanish.  Write
\[\mu=\left|\;\begin{matrix}a & b \\[-0.5pc]
	c & d \end{matrix}\;\right|.
\]
We lose no generality, as the position of $a$ only determines a name change of the other entries in order to get the same formula, $ad-bc$, for $\mu$.  It follows either $b$ or $c$ must also vanish.  If $c$ vanishes, then all 2-minors involving the column $\left(\begin{smallmatrix}a \\ c\end{smallmatrix}\right)$ vanish, and the vertex with the same index as that column is dominating.  If $b=0$ then in $\Graph(A)$ we get a triangle.  As we can see, vanishing of any collection of Pl\"ucker coordinates can only cause other Pl\"ucker coordinates to vanish.  We conclude the minimal primes for $\Var(\underbar I)$ are generated by Pl\"ucker coordinates, and thus, each have a unique corresponding graph.
\end{proof}

\subsection{Consequences}\label{sec:conseq}

It is not hard to see any permissible graph $G$ will contain either isolated vertices, dominating vertices, or neither, but not both. Let $G_{triv}$ denote this set of vertices.  We will say $G_{triv}$ is \emph{dominating} to mean its vertices are dominating, or \emph{isolated}, to mean its vertices are isolated.  $G_{triv}$ may be empty, and $\left|G_{triv}\right|\neq n$.  By permissibility of $G$, the set $G\setminus G_{triv}$ is a disjoint union of $c$ cliques of respective orders $a_1,\dots,a_c$.  

\begin{thm}\label{thm:codimCalc} Suppose $\mathscr S\subset\mathscr G=Grass(n-2,n)$ is a permissible subvariety with graph $G=\Graph(\mathscr S)$.  Let $a_1,\dots,a_c$ denote the respective orders of the cliques in $G\setminus G_{triv}$.  Put $m=\left|G_{triv}\right|$ and $l=\sum_{j=1}^c(a_j-1)$.  Then the codimension of $\mathscr S$ in $\mathscr G$ is 
\[\codim\,\mathscr S=\begin{cases}n-c+m = 2m + l & \text{if $G_{triv}\neq\emptyset$, dominating} \\
	n-c-m = l & \text{otherwise.}\end{cases}\]
\end{thm}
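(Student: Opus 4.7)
The plan is to compute $\codim\,\mathscr S$ by working in an affine chart of $\mathscr G$ and using the graph $G$ to decompose $\mathscr S$ as a product of explicit pieces. Since $G$ is permissible it is not complete, so it admits a non-adjacent pair of vertices; using the symmetric group action from Section \ref{sec:obs} I may assume that pair is $\{v_{n-1},v_n\}$, so $g_{\{1,\dots,n-2\}}$ is non-zero at the generic point of $\mathscr S$. In the affine chart $\mathscr G_{\{1,\dots,n-2\}}\cong\Aff^{2(n-2)}$, points correspond to matrices $A$ whose top $(n-2)\times(n-2)$ block is $\vect I_{n-2}$ and whose bottom $2\times(n-2)$ block $A'$ has columns $c_1,\dots,c_{n-2}\in\Aff^2$. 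Exactly as in the proof that $\Graph(\vect g)$ is permissible, each Pl\"ucker coordinate $g_{\{1,\dots,n\}\setminus\{i,j\}}$ is, up to sign, either an entry of $A'$ or a $2\times 2$ minor of $A'$. Consequently each edge of $G$ translates into a column condition on $A'$: an edge $v_iv_n$ kills the first coordinate of $c_i$, an edge $v_iv_{n-1}$ kills the second, and an edge $v_iv_j$ with $i,j\le n-2$ forces $c_i$ and $c_j$ to be linearly dependent.

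Next I split into two cases according to $G_{triv}$. If $G_{triv}$ is dominating, then $v_{n-1}$ and $v_n$ cannot themselves be dominating (otherwise they would be adjacent), so they must lie in two distinct cliques $C_1\ni v_{n-1}$ and $C_2\ni v_n$, which in particular forces $c\ge 2$. If $G_{triv}$ is empty or isolated, I will choose $v_{n-1},v_n$ to be isolated and/or in distinct cliques, any non-adjacent choice being acceptable. In either case $\mathscr S$ decomposes, in the chart, as a product of irreducible varieties indexed by $G_{triv}$ and by the cliques $C_1,\dots,C_c$. The codimension contributions are read off from the translation above: each dominating $v_i\in\{v_1,\dots,v_{n-2}\}$ forces $c_i=0$ (codim $2$); each $v_i\in C_1\setminus\{v_{n-1}\}$ forces $c_i$ horizontal (codim $1$); each $v_i\in C_2\setminus\{v_n\}$ forces $c_i$ vertical (codim $1$); for each remaining clique $C_k$ the $a_k$ columns it indexes form a rank-$\le 1$ block of $A'$ (the irreducible $2\times a_k$ determinantal variety), contributing codimension $a_k-1$; and isolated vertices in Case B impose no condition at all. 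Each factor is either an affine space or a rank-$\le 1$ determinantal variety, hence irreducible, so $\mathscr S$ is irreducible; along the way I will verify that the non-edge Pl\"ucker coordinates are generically non-vanishing, confirming that the variety produced is $\mathscr S$ itself and not a proper subvariety.

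Summing these contributions and using $\sum_{k=1}^c(a_k-1)=(n-m)-c$, the clique contributions always total $n-m-c$, because a clique ``absorbing'' $v_{n-1}$ or $v_n$ contributes the same $a_k-1$ as a free clique of the same size (fixing the direction of its spanning line costs precisely the one extra parameter). Adding the $2m$ from dominating vertices in Case A, the total codimension will come out to $2m+(n-m-c)=n-c+m=2m+l$ in Case A and $0+(n-m-c)=l$ in Case B, matching the statement.

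The main obstacle is the case analysis for where $v_{n-1}$ and $v_n$ can sit relative to $G_{triv}$ and the cliques, together with the observation that each clique yields the uniform $a_k-1$ codimension contribution regardless of whether it absorbs one of $v_{n-1},v_n$. Once that coincidence is in hand, the rest of the argument is careful bookkeeping.
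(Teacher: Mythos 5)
Your proposal is correct and follows essentially the same route as the paper: normalize in an affine chart $\mathscr G_{\{1,\dots,n-2\}}$ with $v_{n-1},v_n$ a non-adjacent pair, translate edges of $G$ into linear (entry-vanishing) and rank-$\le 1$ (minor-vanishing) conditions on the $2\times(n-2)$ block $A'$, observe that distinct cliques and dominating vertices impose conditions on disjoint column sets, and add up the contributions ($2$ per dominating vertex, $a_k-1$ per clique, $0$ per isolated vertex). You are a bit more explicit than the paper about the placement of $v_{n-1},v_n$ relative to $G_{triv}$ and the cliques, and about why a clique absorbing $v_{n-1}$ or $v_n$ gives the same $a_k-1$ as a ``free'' rank-one clique, but these are refinements of the same argument rather than a different method.
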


\begin{proof} An isolated vertex contributes nothing to the codimension.  Say $A$ is an $n\times(n-2)$ matrix, normalized with respect to some set of indices $\underbar i=\{1,\dots,n\}\setminus\{i,j\}$, and such that  $\Graph(A)=G$.  Let 
\[A'=\begin{pmatrix}a_{i1} & \cdots & a_{i,n-2} \\
	a_{j1} & \cdots & a_{j,n-2}\end{pmatrix},\]
the complementary submatrix to the order $n-2$ identity submatrix of $A$.  A dominating vertex $v\in G$ indicates that two entries on a column of $A'$ vanish, contributing 2 to the codimension.  All other edges joined to $v$ correspond to 2-minors of $A'$ involving that vanishing column and thus contribute nothing to the codimension.

Put $G'=G\setminus G_{triv}$.  Columns of $A'$ involved in a given clique of $G'$ are independent of those involved in minors corresponding to edges joined to dominating vertices.  Finally, there are two cases left to consider.  The first case is where a clique of order $a\geq 2$ indicates a collection of $a-1$ entries from the same row of $A'$ vanish, so contributes $a-1$ to the codimension.  The other case is when a clique of order $a\leq 2$ indicates all 2-minors involving some set of $a$ columns in $A'$ vanish.  The condition is equivalent to the condition that a generic $2\times a$ matrix have rank 1.  It is known (see \cite{hochster+eagon}) that a generic rank 1 matrix of size $2\times a$ defines an ideal of height $a-1$.  \end{proof}

Given a permissible graph $G$, let $H$ denote its complement.  In understanding components of $\Theta(\mathscr Y_{n,n-2,n-2})$ in Equation (\ref{eq:Theta}), we wish to minimally enlarge $H$ to a permissible graph, $\tilde{H}$, and then take a minimal permissible subgraph $\tilde G\subseteq G$ such that together, $\tilde G,\tilde H$ cover $K_n$.  Specifically, $\tilde H$ should not properly contain any permissible subgraph containing $H$, and $\tilde G$ should not contain any permissible subgraph that, with $\tilde H$, forms a covering.  Upon finding such a pair $(\tilde G,\tilde H)$, we let $(\mathscr S,\mathscr T)$ denote the pair of permissible subvarieties with the respective graphs.  

\begin{thm}\label{thm:permPairs} A minimal pair, up to permutation, of permissible subvarieties $(\mathscr S,\mathscr T)$ whose associated graphs form a covering must satisfy: 
\begin{enumerate}[(a)]
\item $\Graph(\mathscr S)$ consists of a clique of order $a$ with the remaining vertices isolated, and 
\item $\Graph(\mathscr T)$ is its complement, an order $n$ graph with $n-a$ dominating vertices.  
\end{enumerate}
These pairs completely describe the components of $\mathscr Y_{n,n-2,n-2}$.  The number of cliques in $\Graph(\mathscr S)$ may take on any value $2\leq a\leq n-1$. \end{thm}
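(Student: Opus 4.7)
The plan is to reduce the claim to a combinatorial question about minimal covering pairs $(\tilde G, \tilde H)$ of permissible graphs of order $n$, and then translate back to $\mathscr Y_{n,n-2,n-2}$ via the bundle map $\Theta$ of Proposition \ref{prop:GrassMap}. Permissible subvarieties correspond bijectively to permissible graphs, so the substantive work is the classification of minimal graph pairs $(\tilde G, \tilde H)$ whose edges together cover $K_n$.

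My first step would be to show $\tilde G$ contains at most one clique of order $\geq 2$. Suppose for contradiction $\tilde G \setminus G_{triv}$ has two maximal cliques $C_1, C_2$ of orders $a_1, a_2 \geq 2$. Every edge between $C_1$ and $C_2$ must lie in $\tilde H$, placing a complete bipartite subgraph $K_{a_1, a_2}$ inside $\tilde H$. Because $K_{a_1, a_2}$ is triangle-free, Proposition \ref{prop:equiv-perm} forbids two vertices of $C_1$ from sharing a clique of $\tilde H \setminus \tilde H_{triv}$, so each vertex of $C_1 \cup C_2$ must be dominating in $\tilde H$ or isolated within $\tilde H \setminus \tilde H_{triv}$. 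A case analysis then shows the $C_1$-internal edges end up in $\tilde G \cap \tilde H$ and may be pruned from $\tilde G$ while preserving both permissibility and the covering, contradicting minimality.

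Second, I would show $\tilde G$ contains exactly one such clique $C$, of some order $a$, with $G_{triv}$ isolated. Having zero nontrivial cliques leaves $\tilde G$ either empty (forcing $\tilde H = K_n$, non-permissible), all dominating (forcing $\tilde G = K_n$, non-permissible), or mixed (forbidden by the structural observation opening Section \ref{sec:conseq}). Having one clique $C$ of order $a$ with $G_{triv}$ dominating also forces $\tilde G = K_n$, since each dominating vertex is then adjacent to all of $C$ and to every other dominating vertex. Hence $G_{triv}$ is isolated and nonempty, so $\tilde G$ is $K_a$ together with $n - a$ isolated vertices and $2 \leq a \leq n - 1$. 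The complement $\tilde H$ is then forced: the $n - a$ former isolated vertices become dominating in $\tilde H$ and no edges appear within $C$; one readily checks this $\tilde H$ is permissible, matching (a) and (b).

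For each $2 \leq a \leq n - 1$, the described pair $(\tilde G, \tilde H)$ is directly verified to be a minimal covering pair. Applying the discussion at the end of Section \ref{sec:r=t}, the components of $\mathscr H \subseteq \mathscr G \times \mathscr G$ correspond to such minimal pairs, and via Proposition \ref{prop:GrassMap} these correspond bijectively under $\Theta^{-1}$ to the irreducible components of $\mathscr Y_{n,n-2,n-2}$. The main obstacle will be the case analysis in the first step, particularly when $a_1 = 2$: removing the single edge of $C_1$ from $\tilde G$ leaves two newly isolated vertices that may seem to conflict with dominating vertices elsewhere in $\tilde G$, but combining the reduction with the second step shows that such conflicts themselves witness further non-minimality, closing the loop.
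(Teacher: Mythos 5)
Your approach is genuinely different from the paper's: the paper gives a constructive algorithm that, starting from an arbitrary permissible graph $G$ and its complement $H$, enlarges $H$ to $\tilde H$ and shrinks $G$ to $\tilde G$ and checks in cases that all outputs have the claimed form; you instead assume a minimal pair $(\tilde G,\tilde H)$ and try to derive its structure directly. That is a legitimate strategy, and your Step 1 (at most one clique of order $\geq 2$ in each of $\tilde G,\tilde H$, via the $K_{a_1,a_2}$ obstruction) is sound in outline. However, Step 2 has a genuine gap.

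The error is in how you handle the ``mixed'' case. You assert that a permissible graph with zero nontrivial cliques that is neither empty nor complete is ``forbidden by the structural observation opening Section~\ref{sec:conseq}.'' That observation only forbids a permissible graph from containing isolated vertices \emph{and} dominating vertices simultaneously. It does \emph{not} forbid dominating vertices together with order-$1$ cliques in $G\setminus G_{triv}$: those vertices have positive degree (they are adjacent to the dominating set) and so are not isolated in $G$. In fact, the graph consisting of $n-a$ dominating vertices and $a$ pairwise nonadjacent remaining vertices is precisely $\Graph(\mathscr T)$ in the theorem's conclusion, and it does arise as one side of a minimal pair. Your conclusion that $G_{triv}$ must be isolated therefore does not follow; the correct statement, modulo the ``up to permutation'' qualifier, is that one of the two graphs has isolated $G_{triv}$ and the other has dominating $G_{triv}$. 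Similarly, your claim ``one clique $C$ of order $a$ with $G_{triv}$ dominating forces $\tilde G=K_n$'' is false whenever $G'$ also contains order-$1$ cliques; $\tilde G=K_n$ requires $m+a=n$. To rule this configuration out of a minimal pair one must actually argue that the edges from a dominating vertex $u$ to the order-$1$ vertices are covered by $\tilde H$ and may all be deleted from $\tilde G$, producing a smaller permissible graph (the clique on $C\cup G_{triv}$ together with isolated vertices) that still covers with $\tilde H$, contradicting minimality. That shrinking argument is exactly the missing content; the $a_1=2$ issue you flag at the end is not where the real difficulty lies.
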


\begin{proof} We now give an algorithm for producing such a pair from a fixed permissible graph $G$.  Let $G_{triv}$ denote the (possibly empty) set of isolated or dominating vertices of $G$, let $G'=G\setminus G_{triv}$ and let $H'$ denote the complement of $G'$.  Let $a_1,\dots,a_c$ denote the sizes of the respective cliques in $G'$.

Suppose $G_{triv}$ is non-empty and consists of dominating vertices.  If $H'$ is permissible and $a_j=1$ for all $j=1,\dots,c$ then let $H$ denote the union of $H'$ and the vertices from $G_{triv}$, so that $G,H$ give respective graphs for permissible subvarieties $\mathscr S,\mathscr T$ and we are done.  If, on the other hand, $H'$ is permissible but $a_j>1$ for some $j$ then $a_i=1$ for all $i\neq j$ and we have two ways to enlarge $H'$: 
\begin{compactenum}[\hspace{7.5pt}A)]
\item Complete $H'$, then let $\tilde H$ denote its union with the vertices from $G_{triv}$.  Then let $\tilde G\subseteq G$ denote the complement of $\tilde H$.  $\tilde G$ is permissible because it consists of the edges incident to vertices in $G_{triv}$.  
\item There is at least one isolated vertices in $G'$.  To construct $\tilde H$ make the isolated vertices from $G'$ into dominating vertices.  Remove their edges from $G$ to get a subgraph $\tilde G$.   
\end{compactenum}
If $H'$ is not permissible, and $G_{triv}$ is non-empty and consists of dominating vertices, then we can either do A), as above, or we can do the following: choose a clique $B$ from $G'$ of order $a_j\geq 2$.  In constructing $\tilde H$, make all vertices in $G\setminus B$ dominating.  The complement, $\tilde G$, of $\tilde H$ is a clique of order $a_j+m$, with the remaining vertices isolated.

To finish the proof, now suppose $G_{triv}$ is either empty or consists of isolated vertices.  If $H'$ is permissible then adding the vertices from $G_{triv}$ to $H'$ and making them dominating does not change permissibility and we are done.  If $H'$ is not permissible then let $H$ denote $H'$, together with the vertices from $G_{triv}$ as dominating vertices.  Choose $j$ such that $a_j>1$ and enlarge $H$ by making all vertices not in that clique, call it $B$, dominating.  The complement $\tilde G$ is exactly the clique $B$.
\end{proof}

\begin{thm}\label{thm:n-2,n-2} The locally closed set $\mathscr Y_{n,n-2,n-2}$, of $n\times n$ matrices of rank $n-2$ whose size $n-2$ principal minors vanish, has dimension $n^2-4-n$. \end{thm}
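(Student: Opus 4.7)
The plan is to read off the dimension from the combinatorial classification already established. By Proposition \ref{prop:GrassMap}, $\Theta$ is a bundle map with fibers isomorphic to $\GL(n-2,K)$, and by construction $\mathscr{Y}_{n,n-2,n-2} = \Theta^{-1}(\mathscr{H})$, so restricting $\Theta$ to $\mathscr{Y}_{n,n-2,n-2}$ is again a $\GL(n-2,K)$-bundle and
\[
\dim \mathscr{Y}_{n,n-2,n-2} \;=\; \dim \mathscr{H} \;+\; (n-2)^2.
\]
The task is thus reduced to computing the maximum dimension of a component of $\mathscr{H}$, which by Theorem \ref{thm:permPairs} is the maximum dimension of a product $\mathscr{S}\times\mathscr{T}$ of permissible subvarieties with complementary graphs of the form described there.

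By Theorem \ref{thm:permPairs}, each such component (up to switching the two factors) is indexed by an integer $a\in\{2,\dots,n-1\}$, where $\Graph(\mathscr{S})$ is one clique of order $a$ together with $n-a$ isolated vertices and $\Graph(\mathscr{T})$ is its complement. I would then apply Theorem \ref{thm:codimCalc} on each factor. For $\mathscr{S}$: $G_{triv}$ is isolated with $m=n-a$, the non-trivial part is a single clique of order $a$ ($c=1$, $l=a-1$), so the ``otherwise'' case gives $\codim\mathscr{S} = l = a-1$. For $\mathscr{T}$: $G_{triv}$ is dominating with $m=n-a$, and $\Graph(\mathscr{T})\setminus G_{triv}$ consists of $a$ isolated vertices viewed as cliques of order $1$ (so $l=0$), hence the dominating case gives $\codim\mathscr{T} = 2m+l = 2(n-a)$.

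Adding and using $\dim(\mathscr{G}\times\mathscr{G}) = 2\dim\Grass(n-2,n) = 4n-8$, the product has codimension $(a-1)+2(n-a) = 2n-a-1$, which is minimized at $a=n-1$ with minimum value $n$. Therefore the top-dimensional components of $\mathscr{H}$ have dimension $(4n-8)-n = 3n-8$, and
\[
\dim \mathscr{Y}_{n,n-2,n-2} \;=\; (3n-8)+(n-2)^2 \;=\; n^2-n-4.
\]
The genuine difficulty of the theorem has already been absorbed into the classification of minimal permissible pairs (Theorem \ref{thm:permPairs}) and the codimension formula (Theorem \ref{thm:codimCalc}); the remaining step is the short optimization above, whose only subtle point is checking that the two cases of Theorem \ref{thm:codimCalc} apply on the correct factor (isolated for $\mathscr{S}$, dominating for $\mathscr{T}$).
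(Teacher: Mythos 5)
Your proposal is correct and follows exactly the route the paper takes: the paper's proof tersely says to start from the $n^2-4$ parameters in the normalized factorization (equivalently, $\dim(\mathscr G\times\mathscr G)+\dim\GL(n-2,K)$) and subtract the minimum of $\codim(\mathscr S\times\mathscr T)$ over the pairs classified in Theorem \ref{thm:permPairs}, with codimensions read off from Theorem \ref{thm:codimCalc}. You have simply carried out the optimization the paper leaves implicit, correctly identifying the two cases of the codimension formula on each factor and obtaining the minimum codimension $n$ at $a=n-1$.
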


\begin{proof} A matrix $A\in\mathscr Y_{n,n-2,n-2}$ has a normalized factorization given by $2(n-2)+(n-2)^2+2(n-2)=n^2-4$ parameters.  Subtract the minimal codimension of $\mathscr S\times\mathscr T$, as computed in Theorem \ref{thm:codimCalc}, over all possible pairs as described in Theorem \ref{thm:permPairs}. \end{proof}

In Section \ref{sec:n=5} we illustrate these results for the case $n=5$.  In the meantime, however, we digress briefly to discuss an attempt to extend the results to $t=n-3$.

\subsection{Components of $\mathscr Y_{n,n-3,n-3}$}\label{sec:n-3}

As in the $r=t=n-2$ case a matrix $A\in\mathscr Y_{n,n-3,n-3}$ factors so that we may identify it with a pair of points in $\Grass(n-3,n)$.  In the spirit of Section \ref{sec:pluckersToGraphs}, to any set of Pl\"ucker coordinates we may associate a simplicial complex that is a union of 2-simplices.  There is a notion of permissibility; permissible 2-complexes are the ones that actually come from a matrix.  We can then ask the following:

\begin{que} Given a permissible 2-complex, is the closure of the algebraic set defined by it irreducible? \end{que}

\begin{que} Is every algebraic set defined by vanishing of Pl\"ucker coordinates a union of 2-permissible ones which are irreducible? \end{que}

The problem reduces to finding the conditions for a set of minors of a generic $3\times(n-3)$ matrix to define a prime ideal.   

\begin{ex} We used {\it Macaulay2} (\emph{M2}) for $n=8$ and $K=\Z/101\Z$ to compute the minimal primes for various collections of Pl\"ucker variables.  Letting
\[U=\begin{pmatrix}1 & 0 & 0 & 0 & 0 \\
0 & 1 & 0 & 0 & 0 \\
0 & 0 & 1 & 0 & 0 \\
0 & 0 & 0 & 1 & 0 \\
0 & 0 & 0 & 0 & 1 \\
u_{61} & u_{62} & u_{63} & u_{64} & u_{65} \\
u_{71} & u_{72} & u_{73} & u_{74} & u_{75} \\
u_{81} & u_{82} & u_{83} & u_{84} & u_{85} \end{pmatrix}
\]
parametrize a matrix whose column space is a point in $\Grass(5,8)$, we considered ideals in $\Z/101\Z[\wedge^5U]$.  

In Figure \ref{fig:overlapping2} we associate a 2-simplex to the ideal  
\[(u_{61}u_{72}-u_{62}u_{71},\,u_{62}u_{73}-u_{63}u_{72}).\]
For simplicity, the vertices are labelled only by their numerical subscripts and axes are drawn to give perspective to the configuration.  According to \emph{M2} the minimal primes are
\[P_1=(u_{61}u_{72}-u_{62}u_{71},\,u_{62}u_{73}-u_{63}u_{72},\,u_{61}u_{73}-u_{63}u_{71})\]
and
\[P_2=(u_{62},u_{72}),\]
both of which are also given by Pl\"ucker variables.  Their corresponding 2-simplices give a picture of what a permissible 2-simplex ``ought" to look like.  Different colors are used for different sized minors.

\begin{figure}
\includegraphics[page=4,width=1\textwidth]{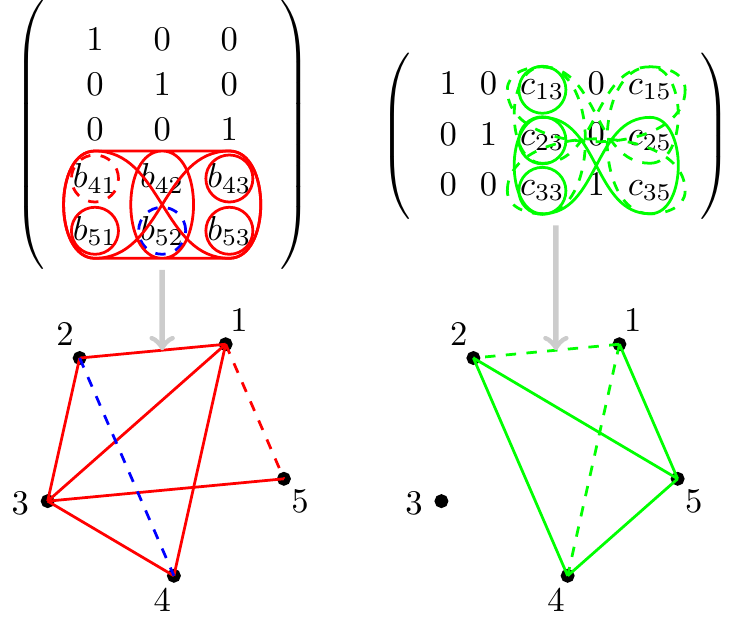}
\caption{Minimal primes for an ideal generated by two overlapping 2-minors of a $3\times(n-3)$ matrix, for $n=8$, along with their respective associated 2-simplices.}\label{fig:overlapping2}
\end{figure}

Similarly, Figure \ref{fig:overlapping3} shows the ideal 
\[(u_{61}u_{72}-u_{62}u_{71},\,u_{62}u_{73}-u_{63}u_{72},\,u_{72}u_{83}-u_{73}u_{82})\]
and its minimal primes $Q_1,Q_2,Q_3$, as computed by \emph{M2}.

\begin{figure}
\includegraphics[page=5,width=1\textwidth]{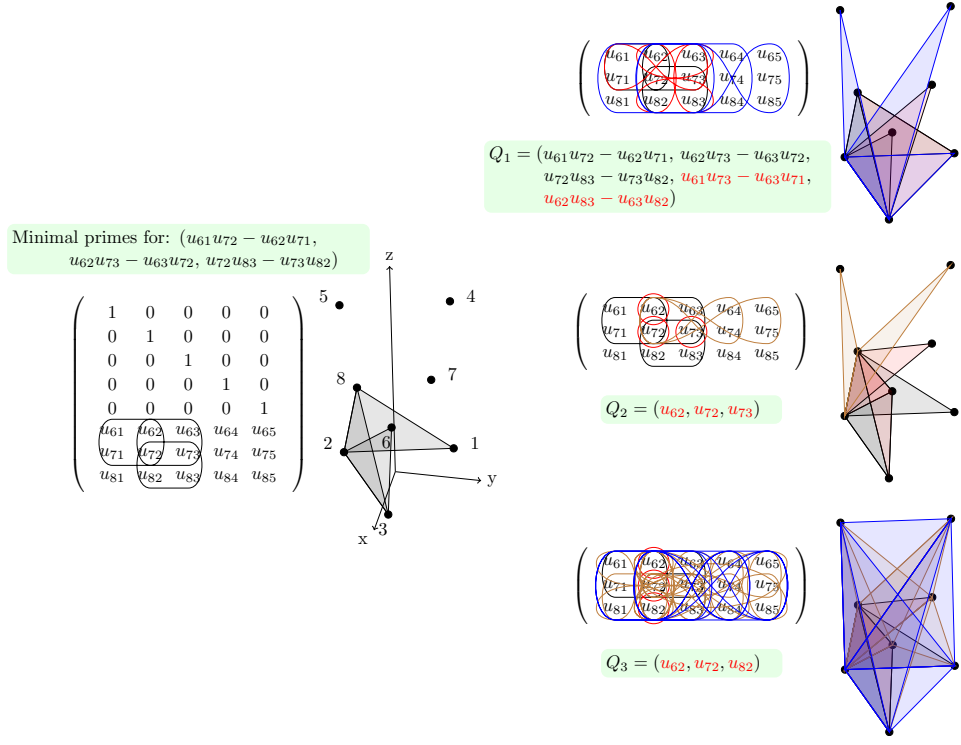}
\caption{Minimal primes for an ideal generated by three overlapping 2-minors of a $3\times(n-3)$ matrix, for $n=8$, along with their respective associated 2-simplices.}\label{fig:overlapping3}
\end{figure}
\end{ex}

In Proposition \ref{prop:case3} we show that an ideal generated by a $3\times 3$ minor and one of its nested $2\times 2$ minors has two minimal primes, also generated by minors.  

\begin{ex} Suppose $X=(x_{ij})$ is a $3\times 3$ generic matrix and let 
\[I=(\det\,X,\,x_{11}x_{22}-x_{12}x_{21})\subset K[X].\]
By Proposition \ref{prop:case3}, the minimal primes for $I$ are
\begin{align*}
(x_{11}x_{22}-x_{12}x_{21},\,x_{11}x_{23}-x_{13}x_{21},\,x_{12}x_{23}-x_{13}x_{22}) & \quad\text{ and } \\
(x_{11}x_{22}-x_{12}x_{21},\,x_{11}x_{32}-x_{12}x_{31},\,x_{21}x_{32}-x_{22}x_{31})
&,
\end{align*}
given by the highlighted minors:
\[\begin{tikzpicture}
\matrix (P1) [matrix of math nodes,left delimiter=(,right delimiter=)] {x_{11} & x_{12} & x_{13} \\
	x_{21} & x_{22} & x_{23} \\
	x_{31} & x_{32} & x_{33} \\};
\matrix (P2) [matrix of math nodes,left delimiter=(,right delimiter=)] at ($(P1)+(4,0)$) {x_{11} & x_{12} & x_{13} \\
	x_{21} & x_{22} & x_{23} \\
	x_{31} & x_{32} & x_{33} \\};
\draw[red,rounded corners] 
	(P1-1-1.north) to[out=0,in=180] (P1-1-2.north) 
	(P1-1-2.north) to[out=0,in=0] (P1-2-2.south) 
	(P1-2-2.south) to[out=180,in=0] (P1-2-1.south) 
	(P1-2-1.south) to[out=180,in=180] (P1-1-1.north) 
	(P1-1-1.north) to[out=0,in=180] (P1-2-3.south)
	(P1-2-3.south) to[out=0,in=0] (P1-1-3.north)
	(P1-1-3.north) to[out=180,in=0] (P1-2-1.south)
	(P1-2-1.south) to[out=180,in=180] (P1-1-1.north) 
	(P1-1-2.north) to[out=0,in=180] (P1-1-3.north) 
	(P1-1-3.north) to[out=0,in=0] (P1-2-3.south) 
	(P1-2-3.south) to[out=180,in=0] (P1-2-2.south) 
	(P1-2-2.south) to[out=180,in=180] (P1-1-2.north) 
	(P2-1-1.north) to[out=0,in=180] (P2-1-2.north) 
	(P2-1-2.north) to[out=0,in=0] (P2-2-2.south) 
	(P2-2-2.south) to[out=180,in=0] (P2-2-1.south) 
	(P2-2-1.south) to[out=180,in=180] (P2-1-1.north) 
	(P2-1-1.north) to[out=0,in=180] (P2-3-2.south)
	(P2-3-2.south) to[out=0,in=-90] (P2-3-2.east)
	(P2-3-2.east) to[out=90,in=90] (P2-3-1.west)
	(P2-3-1.west) to[out=-90,in=180] (P2-3-1.south)
	(P2-3-1.south) to[out=0,in=180] (P2-1-2.north)
	(P2-1-2.north) to[out=0,in=90] (P2-1-2.east)
	(P2-1-2.east) to[out=-90,in=-90] (P2-1-1.west)
	(P2-1-1.west) to[out=90,in=180] (P2-1-1.north) 
	(P2-2-1.north) to[out=0,in=180] (P2-2-2.north) 
	(P2-2-2.north) to[out=0,in=0] (P2-3-2.south) 
	(P2-3-2.south) to[out=180,in=0] (P2-3-1.south) 
	(P2-3-1.south) to[out=180,in=180] (P2-2-1.north) 
	;
\end{tikzpicture}
\]
\end{ex}  

\begin{prop}\label{prop:case3} For a $3\times 3$ generic matrix $X$ the ideal in $K[X]$ generated by $\det\,X$ and the 2-minor $x_{i_1j_1}x_{i_2j_2}-x_{i_1j_2}x_{i_2j_1}$ has two minimal primes:
\begin{align*}
(x_{i_1j_1}x_{i_2j_2}-x_{i_1j_2}x_{i_2j_1},\,x_{i_1j_1}x_{i_2j_3}-x_{i_1j_3}x_{i_2j_1},\,x_{i_1j_2}x_{i_2j_3}-x_{i_1j_3}x_{i_2j_2})\; & \quad\text{ and} \\
(x_{i_1j_1}x_{i_2j_2}-x_{i_1j_2}x_{i_2j_1},\,x_{i_1j_1}x_{i_3j_2}-x_{i_1j_2}x_{i_3j_1},\,x_{i_2j_1}x_{i_3j_2}-x_{i_2j_2}x_{i_3j_1}). &
\end{align*}
\end{prop}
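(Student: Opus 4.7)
\emph{Plan.} First, by independently permuting rows and columns of $X$, I reduce to the standard placement $i_1=1,\,i_2=2,\,j_1=1,\,j_2=2$, so that $\mu=x_{11}x_{22}-x_{12}x_{21}$. Write $P$ for the ideal of $2\times2$ minors of the submatrix on rows $\{1,2\}$ and $Q$ for the ideal of $2\times2$ minors of the submatrix on columns $\{1,2\}$; these are precisely the two candidate primes of the proposition. Both are prime of codimension $2$ by classical determinantal theory \cite{hochster+eagon}, and they are visibly incomparable (each has a generator the other lacks, for instance $x_{12}x_{23}-x_{13}x_{22}\in P\setminus Q$). So it suffices to prove the radical identity $\sqrt{(\det X,\mu)}=P\cap Q$.

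The inclusion $(\det X,\mu)\subseteq P\cap Q$ is immediate: $\mu$ lies in both, and Laplace expansion of $\det X$ along row $3$ (resp.\ column $3$) writes it as an explicit $K[X]$-combination of the generators of $P$ (resp.\ $Q$). For the reverse inclusion at the level of varieties, I fix $A=(a_{ij})\in\Var(\det X,\mu)$ and argue pointwise that $A\in\Var(P)\cup\Var(Q)$. The vanishing of $\mu$ means the top-left $2\times2$ block of $A$ is singular, so the first two rows of $A$ are proportional in columns $1,2$. If this proportionality extends to column $3$ (or if one of those rows is already zero), every $2\times2$ minor of rows $\{1,2\}$ vanishes at $A$, so $A\in\Var(P)$. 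Otherwise, a row operation of the form $R_2\mapsto R_2-\lambda R_1$ replaces row $2$ by a vector $(0,0,\beta)$ with $\beta\neq 0$; the condition $\det A=0$, unchanged by the row operation, forces the $2\times2$ minor of rows $\{1,3\}$ in columns $\{1,2\}$ to vanish, and together with the proportionality in columns $1,2$ this exhibits all three generators of $Q$ vanishing at $A$, so $A\in\Var(Q)$. The degenerate case where the first row already has $a_{11}=a_{12}=0$ is handled by the symmetric cofactor expansion along row $1$ and again lands in $\Var(Q)$.

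Once $\Var(\det X,\mu)=\Var(P)\cup\Var(Q)$ is established, I conclude $\sqrt{(\det X,\mu)}=P\cap Q$ since $P\cap Q$ is already radical, and the minimal primes over this intersection are exactly $P$ and $Q$ by incomparability. The main obstacle I anticipate is the pointwise case analysis: it must exhaustively cover every rank configuration of the top-left block together with the third row of $A$ without gaps. Everything else is classical determinantal theory together with a single cofactor identity.
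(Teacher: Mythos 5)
Your proof is correct, and it takes a genuinely different route from the paper. The paper proves the statement by a localization argument: it checks that $x = x_{i_3 j_2}$ is a non\-zerodivisor on $K[X]/I$ by a height computation, then inverts $x$, performs a row reduction that clears the $i_3$-th row, reads off a primary decomposition of $I R_x$ in the localized ring, and finally contracts back. You instead argue at the level of varieties: you verify the containment $I=(\det X,\mu)\subseteq P\cap Q$ by cofactor expansion, establish the set-theoretic equality $\Var(I)=\Var(P)\cup\Var(Q)$ by an exhaustive rank analysis on a point $A$ (the proportionality of rows $1,2$ either extends to all three columns, putting $A\in\Var(P)$, or does not, in which case the row operation forces the generators of $Q$ to vanish; the degenerate $a_{11}=a_{12}=0$ case is handled separately), and then apply the Nullstellensatz together with the Hochster--Eagon primality of determinantal ideals to deduce $\sqrt I=P\cap Q$. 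I checked your pointwise case analysis and found it exhaustive, and the incomparability of $P$ and $Q$ (via the witness $x_{12}x_{23}-x_{13}x_{22}\in P\setminus Q$) is verified. Your argument is shorter and more elementary; the trade-off is that the paper's localization argument, with the non\-zerodivisor step, actually shows the stronger statement $I=P\cap Q$ (that $I$ itself is the intersection, hence radical), whereas your argument only determines $\sqrt I$. For the purpose of identifying the minimal primes, as the proposition asks, both suffice.
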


\begin{proof} Write $X=\left(\begin{smallmatrix}x_{11} & x_{12} & x_{13} \\
	x_{21} & x_{22} & x_{23} \\
	x_{31} & x_{32} & x_{33} \end{smallmatrix}\right)$.
For the proof we simplify the notation; let
\[\begin{split}
\mu &= x_{i_1j_1}x_{i_2j_2}-x_{i_1j_2}x_{i_2j_1} \\
\Delta & =\det\,X \\
I &= (\Delta,\mu)\subset K[X] \\
R &= K[X]/I \\
\Delta_{rs}^{tu} &= x_{rt}x_{su}-x_{ru}x_{st},\,\text{ for }r,s,t,u\in\{1,2,3\} \\
\left(\text{so that }\mu\right. &= \Delta_{i_1i_2}^{j_1j_2}\left.\right). 
\end{split}\]
Translating to the new notation, we wish to show the minimal primes for $I$ are 
\[(\mu,\Delta_{i_1i_2}^{j_1j_3},\Delta_{i_1i_2}^{j_2j_3})\quad\text{ and }\quad(\mu,\Delta_{i_1i_3}^{j_1j_2}\Delta_{i_2i_3}^{j_1j_2}).\]

We shall localize at $x=x_{i_3j_2}$.  We first show $x$  is not a zerodivisor on $R$ by showing $J=I+(x)$ has height 3 in $K[X]$.  (The ideal $I$ has two generators, so its height is at most 2.)  In $K[X]/J$, 
\[\begin{split}
0 &= \Delta \\
&= \pm\; x_{i_3j_1}\Delta_{i_1i_2}^{j_2j_3}\;\mp\; x\Delta_{i_1i_2}^{j_1j_3}\;\pm\; x_{i_3j_3}\mu \\
& \text{(note, the signs are actually ambiguous, but in this case, are also irrelevant)} \\
&= \pm\;x_{i_3j_1}\Delta_{i_1i_2}^{j_2j_3}
\end{split}\]	
implies we can decompose 
\[\begin{split}
\Var(J) &= \Var(\mu,x,x_{i_3j_1})\;\cup\;\Var(\mu,x,\Delta_{i_1i_2}^{j_2j_3}) \\
&= \Var\underbrace{(\mu,x,x_{i_3j_1})}_{\text{height $=3$}}
	\;\cup\;
	\Var\underbrace{(\mu,\Delta_{i_1i_2}^{j_1j_3},\Delta_{i_1i_2}^{j_2j_3},x)}_{\text{height $=2+1=3$}}
	\;\cup\;\Var\underbrace{(x_{i_1j_2},x_{i_2j_2},x)}_{\text{height $=3$}}, \\
& \text{ since }(\mu,\Delta_{i_1i_2}^{j_2j_3})=(\mu,\Delta_{i_1i_2}^{j_1j_3},\Delta_{i_1i_2}^{j_2j_3})\cap(x_{i_1j_2},x_{i_2j_2}).	
\end{split}\]
Thus $J$ has height 3 and it follows we can invert the element $x$.  

Over the localized ring $R_x=R[\frac{1}{x}]$ we can clear the remaining entries in row $i_3$ of $X$; let $X'=(x_{ij}')$ denote the resulting matrix.  We shall index its minors the same way we did for $X$, only replacing $\Delta$ with $\delta$, e.g., we let $\delta_{rs}^{tu}$ denote the minor $x_{rt}'x_{su}'-x_{ru}'x_{st}'$ for $r,s,t,u\in\{1,2,3\}$.  The entries for $X'$ are \[\begin{array}{r@{\,=\,}l@{\;\qquad\;}r@{\,=\,}l@{\;\qquad\;}r@{\,=\,}l}
	x'_{i_1j_1} & x_{i_1j_1}-\frac{x_{i_3j_1}}{x}x_{i_1j_2} & 
	x'_{i_1j_2} & x_{i_1j_2}
	 &
	x'_{i_1j_3} & x_{i_1j_3}-\frac{x_{i_3j_3}}{x}x_{i_1j_2} 
	\\
	x'_{i_2j_1} & x_{i_2j_1}-\frac{x_{i_3j_1}}{x}x_{i_2j_2}
	 &
	x'_{i_2j_2} & x_{i_2j_2}
	 &
	x'_{i_2j_3} & x_{i_2j_3}-\frac{x_{i_3j_3}}{x}x_{i_2j_2}\\
	x'_{i_3j_1} & 0
	 &
	x'_{i_3j_2} & 1 
	 &
	x'_{i_3j_3} & 0
\end{array}.\]
Expanding along the $i_3$th row (again, the signs are ambiguous, but irrelevant),
\[\Delta=\delta=\pm\;0\cdot\delta_{i_1i_2}^{j_2j_3}\;\mp\;1\cdot\delta_{i_1i_2}^{j_1j_3}\;\pm\;0\cdot\delta_{i_1i_2}^{j_1j_2},\]
and $\mu=\delta_{i_1i_2}^{j_1j_2}$.  Thus $IR_x$ has the decomposition
\[IR_x=(\delta_{i_1i_2}^{j_1j_2},\delta_{i_1i_2}^{j_1j_3})R_x=(\delta_{i_1i_2}^{j_1j_2},\delta_{i_1i_2}^{j_1j_3},\delta_{i_1i_2}^{j_2j_3})R_x\cap(x'_{i_1j_1},x'_{i_2j_2})R_x.\]
The respective contractions to $R$ are the minimal primes for $I$.  For the first ideal,
\[\begin{split}
R\cap(\delta_{i_1i_2}^{j_1j_2},\delta_{i_1i_2}^{j_1j_3},\delta_{i_1i_2}^{j_2j_3})R_x 
&= \left(\mu,\,\Delta_{i_1i_2}^{j_1j_3}-\frac{x_{i_3j_1}}{x}\Delta_{i_1i_2}^{j_2j_3}-\frac{x_{i_3j_3}}{x}\mu,\,\Delta_{i_1i_2}^{j_2j_3}\right):x^{\infty} \\
&= \left(\mu,\,\Delta_{i_1i_2}^{j_1j_3},\,\Delta_{i_1i_2}^{j_2j_3}\right):x^{\infty}  \\
&= \left(\mu,\,\Delta_{i_1i_2}^{j_1j_3},\,\Delta_{i_1i_2}^{j_2j_3}\right),
\end{split}\]
since a prime ideal is already saturated.  For the other prime, since, by hypothesis, $\mu\in(x'_{i_1j_1},x'_{i_2j_1})R_x$, we can write 
\[\begin{split}
R\cap(x'_{i_1j_1},x'_{i_2j_1})R_x &= \left(\mu,\,\frac{1}{x}\Delta_{i_1i_3}^{j_1j_2},\,\frac{1}{x}\Delta_{i_2i_3}^{j_1j_2}\right):x^{\infty} \\
&= \left(\mu,\,\Delta_{i_1i_3}^{j_1j_2},\,\Delta_{i_2i_3}^{j_1j_2}\right),
\end{split}\]
as desired.
\end{proof}

\subsubsection{Generalizing Further} For $r=t<n-3$ or, equivalently, $r=t>3$, the combinatorial approach described above becomes too difficult.  However, these cases always reduce to studying pairs of closed sets in a Grassmannian.  Ford (\cite{ford}) describes a particular type of projective subvariety of the Grassmannian called a \emph{matroid variety}.  Matroids are a type of combinatorial data used to describe many seemingly unrelated objects in mathematics, including graphs, transversals, vector spaces, and networks.  See \cite{pitsoulis} for a recent survey.  Ford computes the codimension of a specific matroid variety called a \emph{positroid variety}.  Positroid varieties, are particularly special; Knutson, Lam and Speyer (\cite{knutson+lam+speyer}) show positroid varieties not only have defining ideals given exactly by Pl\"ucker variables, but such varieties are Cohen-Macaulay, normal, and have rational singularities.  Thus it is worthwhile to ask, are there any conditions where a component of $\mathscr Y_{n,t,t}$ is a product of positroid varieities.
 
\section{Explicit Case: Theorem \ref{thm:n-2,n-2} for $n=5$}\label{sec:n=5}

We explain Theorems \ref{thm:codimCalc}-\ref{thm:n-2,n-2} by focusing on the first non-trivial case, $n=5$.  A matrix $A\in\mathscr Y_{5,3,3}$ has rank $3$ and its size $3$ principal minors vanish.  We have the identification:
\[\begin{split}
\mathscr Y_{5,3,3}\to & \Grass(3,5)\times\Grass(3,5) \\
A\mapsto & \left(\col\,A,\row\,A\right)
\end{split}\]
Without loss of generality, say $\underbar i=\{1,2,3\}$ and $\underbar j=\{1,2,4\}$ index the respective Pl\"ucker coordinates of $(\col\,A,\row\,A)$ which do not vanish.  The factorization
\begin{equation}\label{eq:Afactored}A=\begin{pmatrix}1 & 0 & 0 & \\[-0.25pc]
	0 & 1 & 0 \\[-0.25pc]
	0 & 0 & 1 \\[-0.25pc]
	b_{41} & b_{42} & b_{43} \\[-0.25pc]
	b_{51} & b_{52} & b_{53} \end{pmatrix}
	\cdot A\left(\{1,2,3\};\{1,2,4\}\right)\cdot
	\begin{pmatrix}1 & 0 & c_{13} & 0 & c_{15} \\[-0.25pc]
		0 & 1 & c_{23} & 0 & c_{25} \\[-0.25pc]
		0 & 0 & c_{33} & 1 & c_{35} \end{pmatrix}
\end{equation}
shows $(2\times 3)+(3\times 3)+(2\times 3)=21=25-4$ parameters, not yet considering the requirement that the size 3 principal minors of $A$ vanish.  Now, the principal 3-minors of $A$ vanish if and only if the diagonal entries of $\wedge^3A$ vanish, if and only if for each $i=1,\dots,10$, the $i$th entry of either the column vector $\wedge^3B$ or the row vector $\wedge^3C$ vanishes.  

\begin{figure}
\centering{\includegraphics[page=2,width=0.85\textwidth]{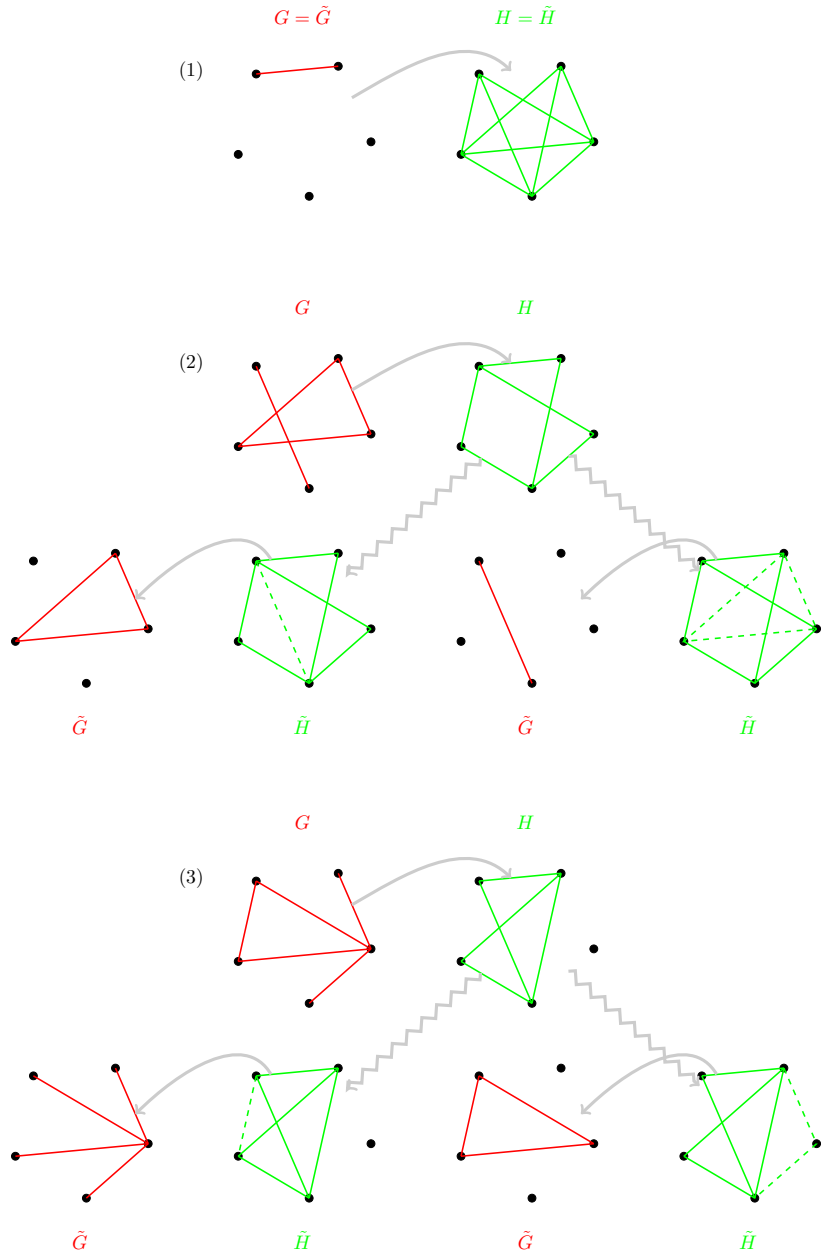}}
\caption{What are the minimal pairs of permissible graphs that cover $n=5$ vertices?  We begin with a permissible red graph, $G$. The green graph to its right is its complement, $H$.  The arrows point to minimal ways to enlarge $H$ to make it permissible; in (1), $H$ is already permissible.  After enlarging $H$ to $\tilde H$, we remove as many edges from $G$ to obtain $\tilde G$, such that $\tilde G,\tilde H$ still form a covering.  It turns out $\tilde G$ will always be permissible, and furthermore, will always be the complement of $\tilde H$.}\label{fig:permiss} 
\end{figure}

\begin{figure}
\centering{\includegraphics[page=1, width=0.5\textwidth]{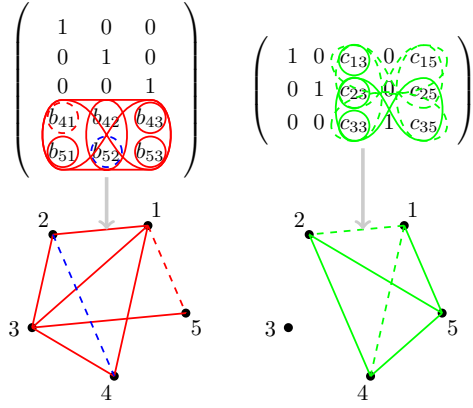}}
\caption{The vertices are labelled only with indices to simplify notation.  In the graphs, an edge joining vertices $v$ and $v'$ is drawn if and only if the Pl\"ucker coordinate with index $\{1,\dots,5\}\setminus\{v,v'\}$ vanishes.  The dotted lines indicate Pl\"ucker coordinates which vanish as a consequence of the solid ones vanishing.  On the left, either the red or the blue dotted line is necessary.}\label{fig:encoding}
\end{figure}

\begin{figure}
\centering{\includegraphics[page=3,width=0.5\textwidth]{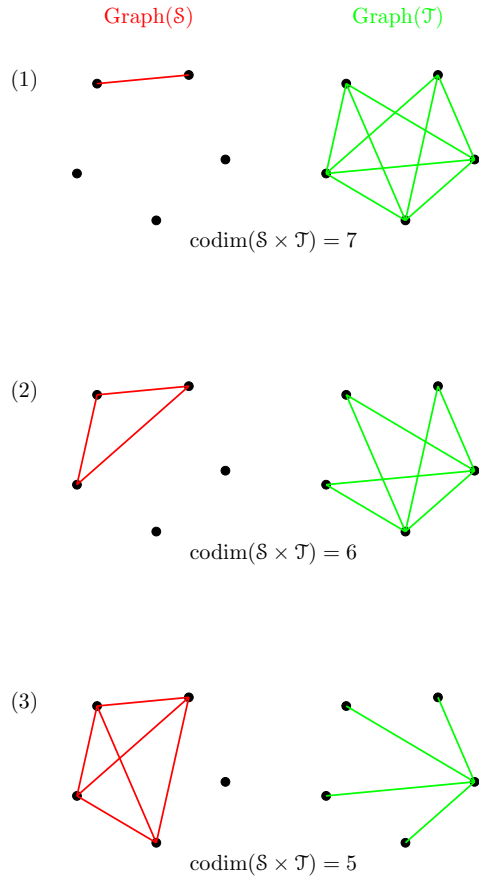}}
\caption{Characterization of the permissible subvarieties $\mathscr S\times \mathscr T\subset\Grass(3,5)\times\Grass(3,5)$ that give components of $\mathscr Y_{5,3,3}$.}\label{fig:thm-permiss}
\end{figure}

\begin{ex} We give a quick example of a possible point $A\in\mathscr Y_{5,3,3}$.  Put $A$ as in (\ref{eq:Afactored}), where we set the colored expressions from Equation (\ref{eq:systemEx}) equal to 0.  

\begin{equation}\label{eq:systemEx}
\left.\begin{array}{r}
	1({\color{green}c_{33}}) \\ 
	{\color{red}b_{43}}(1) \\
	{\color{red}b_{53}}c_{35} \\
	-b_{42}{\color{green}c_{23}} \\
	-b_{52}({\color{green}c_{23}c_{35}-c_{25}c_{33}}) \\
	({\color{red}b_{42}b_{53}-b_{43}b_{52}})(-c_{25}) \\
	b_{41}({\color{green}-c_{13}}) \\
	{\color{red}b_{51}}(-c_{13}c_{35}+c_{15}c_{33}) \\
	({\color{red}-b_{41}b_{53}+b_{43}b_{51}})c_{15} \\ 
	({\color{red}-b_{41}b_{52}+b_{42}b_{51}})(c_{13}c_{25}-c_{15}c_{23}) 
	\end{array}\right\}=0.
\end{equation}
The Pl\"ucker indices for the chosen expressions comprise $\underbar I,\underbar J$:
\[\begin{split}
\underbar I&=\left\{\{1,2,4\},\{1,2,5\},\{1,4,5\},\{2,3,5\},\{2,4,5\},\{3,4,5\}\right\} \\
\underbar J&=\left\{\{1,2,3\},\{1,3,4\},\{1,3,5\},\{2,3,4\}\right\}
\end{split}\]
The solution is shown as the circled Pl\"ucker coordinates in Figure \ref{fig:encoding}.  Notice how the highlighted solution in (\ref{eq:systemEx}) implies the vanishing of additional Pl\"ucker coordinates, as described in the proof of Proposition \ref{prop:pluckgens}; in particular, we have
\[\left.\begin{array}{r}-b_{41}b_{52}+b_{42}b_{51} \\
	b_{51}\end{array}\right\}=0 
\]
implies either $b_{41}=0$ or $b_{52}=0$ and	
\[\left.\begin{array}{r}c_{33} \\
	c_{23} \\
	-c_{13}\end{array}\right\}=0 
\]
implies both $-c_{13}c_{35}+c_{15}c_{23}=0$ and $c_{13}c_{25}-c_{15}c_{23}=0$.
In Figure \ref{fig:encoding} the dotted lines indicate other Pl\"ucker coordinates which vanish as a consequence, making the respective graphs for $\col\,A,\row\,A$ permissible.  The different colored dotted lines in the lefthand matrix and graph reflect the condition that only one of $b_{41}$ or $b_{52}$ is required to vanish. 
\end{ex}

For any $A\in\mathscr Y_{5,3,3}$, we wish to find minimal pairs of permissible subvarieties whose respective graphs cover $K_5$.  Figure \ref{fig:permiss} shows examples of how to construct a pair of permissible graphs which cover $K_5$, given an arbitrary partition of the Pl\"ucker coordinates, i.e., a covering of $K_5$ using a permissible graph and its complement.  Figure \ref{fig:thm-permiss} shows the types of configurations that give a minimally permissible pair of subvarieties.

\section{Conclusion}

Describing the minimal primes for $\pr_{n-2}$ remains incomplete until we have analyzed the components of $\mathscr Y_{n,n-1,n-2}$.  A complete description of $\pr_{n-2}$ would be useful particularly for $n=5$, because we would have a complete understanding of another example of an ideal $\pr_3$.  Then by Theorem 3 in \cite{wheeler/14}, a natural next step would be to begin analysis of the ideals $\pr_{n-3}$.  We anticipate the difficulty will be in studying the locally closed sets $\mathscr Y_{n,n-1,n-3}$ and $\mathscr Y_{n,n-2,n-3}$.  A possible strategy would be to apply the bundle map from Proposition \ref{prop:GrassMap}, restricted to those sets.  

\bibliographystyle{plain}
\bibliography{../wheelerbib}

\end{document}